\numberwithin{equation}{section}
\definecolor{ForestGreen}{rgb}{0.1,0.6,0.05}
\definecolor{EgyptBlue}{rgb}{0.063,0.1,0.6}
\newtheorem{thm}{Theorem}[section]
\newtheorem{lemma}[thm]{Lemma}
\newtheorem{prop}[thm]{Proposition}
\newtheorem{cor}[thm]{Corollary}
\theoremstyle{definition}
\newtheorem{remark}[thm]{Remark}
\title{On full Zakharov equation and its approximations
	\\ \medskip}
\author[1]{Vladimir Bobkov\thanks{E-mail: \texttt{bobkov@kma.zcu.cz}}}
\author[1]{Pavel Dr\'abek\thanks{E-mail: \texttt{pdrabek@kma.zcu.cz}}}
\affil[1]{{\small Department of Mathematics and NTIS, Faculty of Applied Sciences, University of West Bohemia, Univerzitn\'i 8, 306 14 Plze\v{n}, Czech Republic}}
\author[2]{Yavdat Ilyasov\thanks{E-mail: \texttt{ilyasov02@gmail.com}}}
\affil[2]{{\small Institute of Mathematics, Ufa Scientific Center, Russian Academy of Sciences, 112, Chernyshevsky str., 450008 Ufa, Russia}}
\date{}
\begin{document}
\maketitle

\begin{abstract}
	We study the solvability of the Zakharov equation 
	$$
	\Delta^2 u + (\kappa-\omega^2)\Delta u -  \kappa \,\text{div} \left(e^{- |\nabla u|^2} \nabla u\right) = 0
	$$
	in a bounded domain under homogeneous Dirichlet or Navier boundary conditions. This problem is a consequence of the system of equations derived by Zakharov 
	to model the Langmuir collapse in plasma physics.
	Assumptions for the existence and nonexistence of a ground state solution as well as the multiplicity of solutions are discussed. Moreover, we consider formal approximations of the Zakharov equation obtained by the Taylor expansion of the exponential term.
	We illustrate that the existence and nonexistence results are substantially different from the corresponding results for the original problem.

	\par
	\smallskip
	\noindent {\bf  Keywords}: Zakharov equations, Langmuir collapse, ground state, variational methods.
\end{abstract}

\section{Introduction}\label{sec:Introduction}
The main aim of this article is to investigate sets of assumptions which guarantee the existence  of solutions for the  equation
\begin{equation}\label{eq:pro1}
  \Delta^2 u + (\kappa-\omega^2)\Delta u -  \kappa \,\text{div} \left(e^{- |\nabla u|^2} \nabla u\right) = 0
  \quad 
  \text{in } \Omega,
\end{equation}
where $\omega \in \mathbb{R}$, $\kappa > 0$, and $\Omega \subset \mathbb{R}^N$ is a bounded domain.
This equation can be obtained by considering the standing wave solutions of the Zakharov equation \cite[(1.2)]{zakharov1972}
\begin{equation}\label{eq:prototype0}
\Delta\left(i \frac{\partial \psi}{\partial t} + \frac{3}{2} \omega_p r_D^2 \Delta \psi\right) - \frac{\omega_p}{2 n_0} \, \text{div} (\delta n  \, \nabla \psi) = 0,
\end{equation}
introduced for a simplified dynamic description of a plasma turbulence and a development of collapse of appearing low-density regions in the plasma (Langmuir collapse).
Here $\omega_p$ is the frequency of the plasma oscillations, $r_D$ is the Debye radius, $n_0$ is the plasma density (the plasma is assumed to be quasineutral), $\delta n$ is the low-frequency  variation of $n_0$, and $\psi$ is the averaged potential of the high-frequency electric field
$
E = \nabla[\psi \exp(-i\omega_p t) + \overline{\psi} \exp(i\omega_p t)]/2,
$
where $\overline{\psi}$ is a complex conjugate of $\psi$. 

Roughly speaking, under suitable physical assumptions, the equation \eqref{eq:prototype0} can be complemented by connecting $\delta n$ with $\nabla \psi$ via 
the Boltzmann distribution of electrons \cite[(1.4)]{zakharov1972} and the distribution of ions described by the Vlasov equation \cite[(1.5)]{zakharov1972}. 
Zakharov suggested two ways of simplification of the obtained system for different regimes of the plasma: the so-called \textit{static} and \textit{hydrodynamic} approximations. In the present article, we are interested in the static approximation which yields the connection (see \cite[(1.7)]{zakharov1972})
\begin{equation}\label{Boltsman}
\delta n = n_0 \left[\text{exp}\left(-\frac{|\nabla \psi|^2}{16 \pi n_0 (T_i + T_e)}\right)-1 \right],
\end{equation}
where $T_i$ and $T_e$ are the ion and electron temperatures, respectively. 
Substituting \eqref{Boltsman} into \eqref{eq:prototype0}, we deduce the equation
\begin{equation*}
\Delta\left(i \frac{\partial \psi}{\partial t} + \frac{3}{2} \omega_p r_D^2 \Delta \psi\right) - \frac{\omega_p}{2} \, \text{div} \left(\text{exp}\left(-\frac{|\nabla \psi|^2}{16 \pi n_0 (T_i + T_e)}\right) \nabla \psi\right) + \frac{\omega_p}{2} \, \Delta \psi = 0.
\end{equation*}
This equation, under the standing wave solution ansatz $\psi = e^{i \omega^2 t} u$, where $-\omega^2$ is a frequency and $u$ is a real function, leads (in the normalized form) to the stationary Zakharov equation \eqref{eq:pro1}, where $\kappa > 0$ is a fixed parameter defined by the settings of the plasma.

In fact, Zakharov considered the first two terms of the Taylor expansion of the exponential function in \eqref{Boltsman} to get $\delta n \approx - \frac{|\nabla \psi|^2}{16 \pi (T_i + T_e)}$ and, after substitution in \eqref{eq:prototype0}, he obtained the equation \cite[(1.8)]{zakharov1972}
\begin{equation}\label{eq:prototype01}
\Delta\left(i \frac{\partial \psi}{\partial t} + \frac{3}{2} \omega_p r_D^2 \Delta \psi\right) + \frac{\omega_p}{32 \pi n_0 (T_i + T_e)} \, \text{div} (|\nabla \psi|^2 \, \nabla \psi) = 0.
\end{equation}
In particular, after suitable change of variables, we see that the amplitude of the standing wave solution, $u=u(x)$, has to satisfy the normalized equation
\begin{equation}\label{eq:prototype2}
\Delta^2 u -\omega^2 \Delta u + \kappa \, \text{div} \left(|\nabla u|^{2} \, \nabla u\right) 
= 0.
\end{equation}
The following generalization of this equation:
\begin{equation*}\label{eq:prototype3}
\Delta^2 u -\omega^2 \Delta u + \kappa \, \text{div} \left(|\nabla u|^{\sigma} \, \nabla u\right) 
= 0,
\end{equation*}
where $\sigma \in (0, +\infty)$ for $N =2$ and $\sigma \in (0, 4/(N-2))$ for $N \geq 3$, was studied by Colin \cite{colin} and Colin \& Weinstein \cite{colinweinst} in the entire space $\mathbb{R}^N$. They proved that this equation possesses a ground state solution which decays at infinity, and studied its stability.

On the other hand, Dyachenko et al.\ \cite{dyachenko1987,dyachenko1991} worked with the following approximative equation obtained by the first three terms of the Taylor expansion of the exponential function in \eqref{Boltsman}:
\begin{equation}\label{eq:prototype1}
\Delta^2 u -\omega^2 \Delta u + \kappa \, \text{div} \left(|\nabla u|^2 \nabla u\right) 
-\frac{\kappa}{2} \, \text{div} \left(|\nabla u|^4 \nabla u\right)
= 0.
\end{equation}
The last term on the left-hand side of \eqref{eq:prototype1} corresponds to higher-order nonlinear effects which can significantly affect the process of the Langmuir collapse.

Notice that numerical simulations indicate that solutions which correspond to the Langmuir collapse are not radially symmetric, and, in general, have no spatial symmetry, see, e.g., \cite{Degtyarev, dyachenko1987, dyachenko1991, Robinson1989, Robinson}.
From this point of view, it makes sense to consider the Zakharov equation in a bounded domain, which can be seen as an approximation of the entire space case.
Let us remark that the hydrodynamic approximation of the Zakharov system of equations in a bounded domain was treated in \cite{CS}.

\smallskip
In the present article, we look for solutions of the equation \eqref{eq:pro1} which, unlike \eqref{eq:prototype2} and \eqref{eq:prototype1}, includes the full Boltzmann distribution \eqref{Boltsman}. 
We consider \eqref{eq:pro1} in a bounded domain $\Omega \subset \mathbb{R}^N$, $N \geq 1$, whose boundary $\partial \Omega$ is of class $C^{4,\eta}$, $\eta \in (0,1)$ (see, e.g., \cite[p.\ 94]{giltrud}).
We will find solutions of \eqref{eq:pro1} which satisfy the following two basic types of homogeneous boundary conditions:
\begin{alignat}{2}
\label{eq:Dirichlet}
&\text{Dirichlet boundary condition:}&
\quad
&u = \frac{\partial u}{\partial \nu} = 0 \quad \text{on } \partial \Omega, \\
\label{eq:Navier}
&\text{Navier boundary condition:}&
\quad
&u =  \Delta u	= 0 \quad \text{on } \partial \Omega.
\end{alignat}
Here $\nu$ is the outward unit normal vector to $\partial \Omega$.
For a working space for the problem \eqref{eq:pro1}, \eqref{eq:Dirichlet} we set $X := W^{2,2}_0(\Omega)$, while for the problem \eqref{eq:pro1}, \eqref{eq:Navier} we set $X := W^{2,2}(\Omega) \cap W_0^{1,2}(\Omega)$ under an additional assumption that $\partial \Omega$ satisfies a uniform outer ball condition (see, e.g., \cite[p.~52]{gazzola2010}). 
Hereinafter, by a solution of \eqref{eq:pro1} we will understand a solution of the boundary value problem \eqref{eq:pro1}, \eqref{eq:Dirichlet} or \eqref{eq:pro1}, \eqref{eq:Navier}, omitting, for simplicity, the reference to the boundary conditions.
More precisely, we say that $u \in X$ is a (weak) solution of \eqref{eq:pro1} if $u$ is a critical point of the energy functional $E_\omega \in C^2(X, \mathbb{R})$ defined by
$$
E_\omega(u) 
= 
\frac{1}{2} \int_\Omega |\Delta u|^2 \, dx
-\frac{\kappa-\omega^2}{2} \int_\Omega |\nabla u|^2 \, dx
+ \frac{\kappa}{2} \int_\Omega \left(1 - e^{- |\nabla u|^2}\right) dx,
$$
that is, 
\begin{equation}\label{eq:sol}
\left<E'_\omega(u),\varphi\right> = 
\int_\Omega \Delta u \Delta \varphi \, dx
- (\kappa-\omega^2) \int_\Omega (\nabla u, \nabla \varphi) \, dx
+ \kappa \int_\Omega e^{- |\nabla u|^2} (\nabla u, \nabla \varphi) \, dx = 0
\end{equation}
for all $\varphi \in X$. 
Here $\left<\cdot,\cdot\right>$ stands for the dual pairing between $X$ and its dual space $X^*$, and $(\cdot,\cdot)$ denotes the scalar product in $\mathbb{R}^N$.
Note that if a weak solution $u$ belongs to $C^{4}(\overline{\Omega})$, then $u$ is a classical solution, that is, the equation \eqref{eq:pro1} and boundary conditions are satisfied pointwise in $\overline{\Omega}$.
We will say that a nonzero weak solution $u$ is a \textit{ground state solution} whenever
$$
E_\omega(u) \leq E_\omega(v) 
\quad \text{for any nonzero weak solution } v \text{ of } \eqref{eq:pro1}.
$$

In the present article we prove the following result on the existence and nonexistence of solutions to \eqref{eq:pro1}. Denote by $\lambda_k$ the sequence of eigenvalues of the problem \eqref{eq:eigenvalue}, see Section~\ref{sec:preliminaries} below.

\begin{thm}\label{thm:1}
	Assume that $\kappa-\lambda_{k+1} < \omega^2 < \kappa-\lambda_k$ for some $k \in \mathbb{N}$.
	Then \eqref{eq:pro1} possesses a ground state solution $u \in C^{4,\gamma}(\overline{\Omega})$ with $E_\omega(u) > 0$, which is the critical point of $E_\omega$ of saddle type.
	Moreover, if  $\omega^2 \geq \kappa - \lambda_1$, then \eqref{eq:pro1} has no nonzero solutions.
\end{thm}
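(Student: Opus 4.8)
The plan is to handle the two assertions separately: the nonexistence statement is elementary, while the existence part is a mountain–pass argument refined by a sharp energy identity. For the nonexistence, assume $\omega^2\ge\kappa-\lambda_1$ and let $u\in X$ be a solution. Choosing $\varphi=u$ in \eqref{eq:sol} gives
$$
\int_\Omega|\Delta u|^2\,dx-(\kappa-\omega^2)\int_\Omega|\nabla u|^2\,dx+\kappa\int_\Omega e^{-|\nabla u|^2}|\nabla u|^2\,dx=0 .
$$
Since $\lambda_1$ is the minimum of $\int_\Omega|\Delta w|^2\,dx\big/\int_\Omega|\nabla w|^2\,dx$ over $X\setminus\{0\}$, the first integral is at least $\lambda_1\int_\Omega|\nabla u|^2\,dx$, whence
$$
0\ \ge\ (\lambda_1-\kappa+\omega^2)\int_\Omega|\nabla u|^2\,dx+\kappa\int_\Omega e^{-|\nabla u|^2}|\nabla u|^2\,dx .
$$
Both terms on the right are nonnegative (the first by hypothesis), so $\nabla u=0$ a.e.\ and $u\equiv0$.

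\textbf{Existence, variational set-up.} Put $\mu:=\kappa-\omega^2$, so the hypothesis reads $\lambda_k<\mu<\lambda_{k+1}$; write $X=X^-\oplus X^+$ with $X^-$ the span of the first $k$ eigenfunctions of \eqref{eq:eigenvalue} and $X^+$ its $\nabla$-orthogonal complement, so $\dim X^-=k$ and $\int|\Delta w|^2\le\lambda_k\int|\nabla w|^2$ on $X^-$, $\int|\Delta w|^2\ge\lambda_{k+1}\int|\nabla w|^2$ on $X^+$. A direct computation gives $\langle E_\omega''(0)\varphi,\varphi\rangle=\int_\Omega|\Delta\varphi|^2\,dx+\omega^2\int_\Omega|\nabla\varphi|^2\,dx$, which is coercive on $X$, so $E_\omega''(0)$ is an isomorphism and $0$ is a strict local minimum and an isolated critical point. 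On the other hand $1-e^{-t}\le1$ gives, for $w\in X^-$, $E_\omega(w)\le\tfrac12(\lambda_k-\mu)\int_\Omega|\nabla w|^2\,dx+\tfrac\kappa2|\Omega|\to-\infty$ as $\|w\|_X\to\infty$, so $E_\omega$ has the mountain–pass geometry between $0$ and a far point of $X^-$. I would verify the Palais–Smale condition by first bounding a sequence $(u_n)$ with $E_\omega(u_n)$ bounded and $E_\omega'(u_n)\to0$: testing with $P_{X^-}u_n$ and $P_{X^+}u_n$ and using that $p\mapsto e^{-|p|^2}p$ is bounded on $\mathbb R^N$ (so the exponential contribution is controlled by $\|\nabla(P_{X^\pm}u_n)\|_{L^2}$) together with the non-resonance $\lambda_k<\mu<\lambda_{k+1}$ shows that $\|P_{X^-}u_n\|_X$, then $\|P_{X^+}u_n\|_X$, are bounded; the compact embedding $X\hookrightarrow\hookrightarrow W^{1,2}(\Omega)$ and the global Lipschitz bound for $p\mapsto e^{-|p|^2}p$ then turn $\langle E_\omega'(u_n)-E_\omega'(u),u_n-u\rangle\to0$ into $u_n\to u$ in $X$. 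The mountain–pass theorem produces a nonzero solution of positive energy, so $\mathcal S:=\{v\in X\setminus\{0\}:E_\omega'(v)=0\}$ is nonempty.

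\textbf{From a solution to a ground state.} The key is the identity obtained from $E_\omega(v)-\tfrac12\langle E_\omega'(v),v\rangle$: for every solution $v$,
$$
E_\omega(v)=\frac\kappa2\int_\Omega\Bigl(1-e^{-|\nabla v|^2}\bigl(1+|\nabla v|^2\bigr)\Bigr)dx\ \in\ \Bigl[0,\tfrac\kappa2|\Omega|\Bigr),
$$
since $t\mapsto1-e^{-t}(1+t)$ increases from $0$ to $1$, with equality $E_\omega(v)=0$ only for $v=0$. Combining this bound with the consequence of testing \eqref{eq:sol} by $P_{X^-}v$ (which, again using boundedness of $p\mapsto e^{-|p|^2}p$, yields a bound on $\|P_{X^-}v\|_X$), with the inequality $\tfrac12(\|\Delta v\|_{L^2}^2-\mu\|\nabla v\|_{L^2}^2)\le E_\omega(v)$, and with the coercivity of $w\mapsto\|\Delta w\|_{L^2}^2-\mu\|\nabla w\|_{L^2}^2$ on $X^+$, I obtain a uniform a priori bound $\|v\|_X\le M$ valid for all $v\in\mathcal S$. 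Now let $m:=\inf_{\mathcal S}E_\omega\ge0$. If $m=0$, a minimizing sequence is bounded, so up to a subsequence $\nabla v_n\to\nabla v$ in $L^2$ and a.e.\ by compactness; the displayed identity and dominated convergence force $\nabla v\equiv0$, hence $\nabla v_n\to0$ in $L^2$, hence (testing with $v_n$) $\|\Delta v_n\|_{L^2}^2\le\mu\|\nabla v_n\|_{L^2}^2\to0$, i.e.\ $v_n\to0$ in $X$ — impossible since $0$ is isolated. Thus $m>0$. Finally, for a minimizing sequence the a priori bound and the same compactness allow passing to the limit in \eqref{eq:sol} (the exponential term by dominated convergence), yielding a solution $v$, and the identity plus dominated convergence give $E_\omega(v)=m>0$, so $v\ne0$ and $v$ is the sought ground state.

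\textbf{Type and regularity.} That $v$ is of saddle type follows by testing $E_\omega''(v)$ with $v$ and using $\langle E_\omega'(v),v\rangle=0$: this gives $\langle E_\omega''(v)v,v\rangle=-2\kappa\int_\Omega e^{-|\nabla v|^2}|\nabla v|^4\,dx<0$, so $v$ is not a local minimum, while $E_\omega''(v)$ differs from the isomorphism $L\colon X\to X^*$, $\langle L\varphi,\psi\rangle=\int_\Omega\Delta\varphi\,\Delta\psi\,dx$, by an operator that factors through $W^{1,2}(\Omega)$ and is therefore compact, so $E_\omega''(v)$ is positive definite on a subspace of finite codimension and $v$ is not a local maximum either. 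Regularity is a bootstrap: from $v\in W^{2,2}$ and smoothness of $p\mapsto e^{-|p|^2}p$ one gets $\text{div}(e^{-|\nabla v|^2}\nabla v)\in L^2$, so $\Delta^2v\in L^2$ and $v\in W^{4,2}$ by $L^2$-theory for the bilaplacian under \eqref{eq:Dirichlet} or \eqref{eq:Navier}; iterating with $L^p$- and then Schauder estimates on the $C^{4,\eta}$ domain gives $v\in C^{4,\gamma}(\overline{\Omega})$. The real obstacle I anticipate is precisely the pair "$m>0$ and $m$ attained": one must rule out near-minimal nontrivial solutions collapsing to $0$ or escaping to infinity, and this relies essentially on the boundedness of the nonlinearity $e^{-|\nabla v|^2}\nabla v$ and of the critical energies (the displayed identity) — features lost in the polynomial truncations \eqref{eq:prototype2} and \eqref{eq:prototype1}.
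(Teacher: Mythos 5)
Your proposal is correct, and in several key steps it takes a genuinely different route from the paper. For the nonexistence you test with $u$ and invoke the variational characterization of $\lambda_1$; the paper packages the identical computation as ``the Nehari manifold $\mathcal{N}_\omega$ is empty when $\kappa-\omega^2\le\lambda_1$'' (Remark \ref{rem:nonexistence}), so this part is essentially the same. The existence part diverges more substantially. For the mountain-pass ring condition the paper runs a contradiction argument with normalized sequences $w_m\in\mathcal{N}_\omega$, $\|\Delta w_m\|_{L^2}\to 0$, and a careful splitting of the exponential integral; you instead observe that $\langle E_\omega''(0)\varphi,\varphi\rangle=\|\Delta\varphi\|_{L^2}^2+\omega^2\|\nabla\varphi\|_{L^2}^2$ is coercive, which yields both the ring condition and the isolatedness of $0$ (the latter is not stated in the paper but is needed by, and correctly used in, your argument); this is cleaner, at the cost of leaning on the asserted $C^2$-regularity of $E_\omega$. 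For boundedness of Palais--Smale sequences the paper normalizes $u_n=\|\Delta u_n\|_{L^2}v_n$ and derives that $\kappa-\omega^2$ would be an eigenvalue; you exploit that $E_\omega'=L_\mu+(\text{uniformly bounded term})$ with $L_\mu$ invertible off the spectrum, via the splitting $X=X^-\oplus X^+$ --- the same non-resonance hypothesis, used more directly. The largest difference is the ground-state step: the paper shows the mountain-pass critical point itself is a ground state by building, from any competitor $v$ with lower energy, the admissible path $s\mapsto s\,t_2 v$ whose maximum equals $E_\omega(v)$ (Lemma \ref{lem:ExNeh} and the monotone structure of the fibering map); you instead minimize $E_\omega$ over the full solution set, using a uniform a priori bound on solutions (from the boundedness of $p\mapsto e^{-|p|^2}p$), the identity $E_\omega(v)=\tfrac{\kappa}{2}\int_\Omega\bigl(1-e^{-|\nabla v|^2}(1+|\nabla v|^2)\bigr)dx$ (this is exactly Lemma \ref{lem:level}), and the isolatedness of $0$ to get $m>0$ and attainment. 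Your route avoids the fibering-map analysis but requires the a priori bound and the compactness of the weak limit of solutions; both work. Your explicit Hessian computation $\langle E_\omega''(v)v,v\rangle=-2\kappa\int_\Omega e^{-|\nabla v|^2}|\nabla v|^4\,dx<0$, combined with positivity of $E_\omega''(v)$ modulo a compact perturbation, gives the saddle-type claim more concretely than the paper's appeal to the mountain-pass characterization. The regularity bootstrap coincides with Section \ref{sec:regularity}. I see no genuine gap.
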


In Propositions \ref{prop:approx_first} and \ref{prop:approx_two} below, we also prove the corresponding existence results for the approximative problems \eqref{eq:prototype2} and \eqref{eq:prototype1}. These results show that the set of solutions for \eqref{eq:pro1} can be significantly different than the sets of solutions for \eqref{eq:prototype2} and \eqref{eq:prototype1}.

In addition, we provide the following multiplicity result for the problem \eqref{eq:pro1}.
\begin{thm}\label{thm:2}
	Assume that $\kappa-\lambda_{k+1} < \omega^2 < \kappa-\lambda_k$ for some $k \in \mathbb{N}$. Then \eqref{eq:pro1} has at least $k$ nonzero solutions with positive energy which enjoy $C^{4,\gamma}(\overline{\Omega})$-regularity. These solutions are critical points of $E_\omega$ of saddle type.
\end{thm}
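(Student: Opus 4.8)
The plan is to set up a standard linking/minimax argument for the $C^2$ functional $E_\omega$, exploiting the fact that the hypothesis $\kappa-\lambda_{k+1}<\omega^2<\kappa-\lambda_k$ forces the quadratic part $Q(u)=\frac12\int_\Omega|\Delta u|^2\,dx-\frac{\kappa-\omega^2}{2}\int_\Omega|\nabla u|^2\,dx$ of $E_\omega$ to be negative definite on the $k$-dimensional eigenspace $V_k:=\mathrm{span}\{e_1,\dots,e_k\}$ and positive definite on its orthogonal complement $W_k:=V_k^\perp$ (with respect to a suitable inner product adapted to the eigenvalue problem~\eqref{eq:eigenvalue}). Since the nonlinear term $\frac{\kappa}{2}\int_\Omega(1-e^{-|\nabla u|^2})\,dx$ is nonnegative, of class $C^2$, has vanishing gradient and Hessian at $u=0$, and is bounded (so it is a lower-order perturbation), $E_\omega$ has a local linking structure at the origin with respect to the splitting $X=V_k\oplus W_k$: $E_\omega$ is positive and bounded below by a positive constant on a small sphere in $W_k$, while $E_\omega\le 0$ on $V_k$ near $0$ (in fact $E_\omega(u)\le Q(u)+C$ globally, and on $V_k$ one has $Q(u)\to-\infty$).

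First I would verify the Palais–Smale condition for $E_\omega$ on all of $X$. Along a $(PS)_c$ sequence $\{u_n\}$, the bound on $E_\omega(u_n)$ together with $\langle E_\omega'(u_n),u_n\rangle=o(\|u_n\|)$ gives control on $\|\Delta u_n\|_{L^2}^2-(\kappa-\omega^2)\|\nabla u_n\|_{L^2}^2$ modulo the bounded nonlinear terms; because $\int_\Omega e^{-|\nabla u_n|^2}|\nabla u_n|^2\,dx$ is uniformly bounded (the integrand is bounded) and $\int_\Omega(1-e^{-|\nabla u_n|^2})\,dx\le|\Omega|$, the quadratic form is coercive on $W_k$ and the finite-dimensional part $V_k$ is automatically controlled, so $\{u_n\}$ is bounded in $X$; then the compact embedding $X\hookrightarrow W^{1,2}(\Omega)$ and the (weak-to-strong) continuity of the nonlinear operator $u\mapsto e^{-|\nabla u|^2}\nabla u$ from this embedding yield a strongly convergent subsequence, via the standard $(S_+)$-type argument using $\langle E_\omega'(u_n)-E_\omega'(u),u_n-u\rangle\to 0$ and the uniform convexity of the $\|\Delta\cdot\|_{L^2}$-norm on $X$. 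Next I would invoke an abstract critical point theorem producing multiple critical points from a local linking at a point where the functional splits into a $k$-dimensional "negative" subspace and its complement—e.g.\ the results of Li–Liu or the $\mathbb{Z}_2$-version of the linking theorem for even functionals (noting $E_\omega$ is even). Applying it with the flag $V_1\subset V_2\subset\dots\subset V_k$ and minimax classes built over symmetric subsets of increasing dimension, one obtains $k$ distinct critical values $0<c_1\le c_2\le\dots\le c_k$; the lower bound $c_1>0$ comes from the positive lower bound of $E_\omega$ on the small sphere in $W_k$, and distinctness (or multiplicity when values coincide) from the genus/index estimates, giving at least $k$ nonzero critical points, all with positive energy, none of which is the saddle point $u=0$.

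Finally, the regularity and saddle-type statements follow exactly as in Theorem~\ref{thm:1}: each weak solution $u$ satisfies~\eqref{eq:sol}, and a bootstrap using elliptic $L^p$ and Schauder estimates for the biharmonic operator under the Dirichlet or Navier boundary conditions (the exponential nonlinearity $e^{-|\nabla u|^2}\nabla u$ being smooth and globally Lipschitz as a function of $\nabla u$) upgrades $u$ to $C^{4,\gamma}(\overline\Omega)$; the Morse-theoretic/linking characterization of the critical values $c_i$ as minimax levels over sets that are neither local minima nor local maxima shows each is of saddle type (the Hessian $E_\omega''(u)$ is indefinite).

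The main obstacle I anticipate is the verification that the abstract multiplicity/linking theorem applies cleanly here: one must check that the "local linking" is genuinely local (so the finite-dimensional negative space $V_k$ is exactly the span of the first $k$ eigenfunctions and the perturbation does not destroy the sign conditions on a neighborhood of $0$), and that the deformation lemma is available, which again reduces to the Palais–Smale condition—so the crux is really the compactness argument for $(PS)$ sequences together with correctly matching the index of $V_k$ to the number of solutions produced. Checking that the nonlinearity is a genuinely lower-order (compact) perturbation on $X$, and that no critical points escape to infinity or collapse onto $0$, is where the care is needed.
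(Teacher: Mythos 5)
There is a genuine error in the geometric setup. You claim that the nonlinear term $N(u)=\tfrac{\kappa}{2}\int_\Omega(1-e^{-|\nabla u|^2})\,dx$ ``has vanishing gradient and Hessian at $u=0$'' and deduce a local linking at the origin with $V_k=\mathrm{span}\{e_1,\dots,e_k\}$ as the negative space. This is false: since $1-e^{-s}=s-s^2/2+\cdots$, one has $N''(0)[\varphi,\varphi]=\kappa\int_\Omega|\nabla\varphi|^2\,dx$, which exactly cancels the $-\kappa$ contribution of the quadratic part, so that
\begin{equation*}
E_\omega''(0)[\varphi,\varphi]=\int_\Omega|\Delta\varphi|^2\,dx+\omega^2\int_\Omega|\nabla\varphi|^2\,dx>0
\quad\text{for all }\varphi\in X\setminus\{0\}.
\end{equation*}
Hence $0$ is a local minimum of $E_\omega$, the claim ``$E_\omega\le 0$ on $V_k$ near $0$'' fails, and the Li--Liu local-linking theorems you propose to invoke are not applicable: there is no $k$-dimensional negative space for $E_\omega''(0)$. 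The multiplicity here cannot come from the local structure at the origin.

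The correct mechanism --- which you actually touch upon with the remark that $E_\omega(u)\le Q(u)+C$ globally and $Q\to-\infty$ on the span of the first $k$ eigenfunctions --- is a linking \emph{at infinity}: the exponential term is bounded by $\kappa|\Omega|/2$, so on $W=\mathrm{span}\{\varphi_1,\dots,\varphi_k\}$ the condition $\lambda_k<\kappa-\omega^2$ forces $E_\omega$ to be bounded above (indeed negative outside a large ball), while the sphere condition $E_\omega\ge\alpha>0$ on $\{\|\Delta u\|_{L^2}=\rho\}$ holds on all of $X$ (a subspace of codimension $0$). These are precisely the hypotheses of the symmetric critical point theorem of Bartolo--Benci--Fortunato, which is what the paper applies (together with evenness of $E_\omega$ and the Palais--Smale condition, valid because $\kappa-\omega^2$ is not an eigenvalue) to produce $k$ saddle-type critical points with levels in $[\alpha,\gamma]$. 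Your Palais--Smale sketch is essentially workable, but note that the boundedness of the component of a PS sequence in $V_k$ is not ``automatic from finite dimensionality''; it comes from testing $E_\omega'(u_n)$ against that component and using the gap $\lambda_k<\kappa-\omega^2$ together with the uniform bound on $s\mapsto se^{-s^2}$ (the paper instead uses a normalization/contradiction argument exploiting that $\kappa-\omega^2$ is not an eigenvalue). The regularity and saddle-type conclusions are handled as you indicate.
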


\smallskip
The article is organized as follows. In Section \ref{sec:preliminaries}, we give some preliminary information. We also show the nonexistence part of Theorem \ref{thm:1}, see Remark \ref{rem:nonexistence}. 
Section \ref{sec:ground_state} contains the existence part of Theorem \ref{thm:1}. 
The application of the direct minimization over the Nehari manifold lets to significant problems which we overcome by using the mountain-pass theorem due to Ambosetti and Rabinowitz \cite{AR}.
In Section \ref{sec:bound_state}, we prove the multiplicity result of Theorem \ref{thm:2} by means of an abstract critical point theorem due to Bartolo, Benci and Fortunato \cite{BBF}. 
Section \ref{sec:regularity} is devoted to the regularity of obtained solutions. Here we use a bootstrap argument combined with suitable embeddings of Sobolev spaces.
Finally, in Section \ref{sec:comparison}, we show the difference between \eqref{eq:pro1} and its formal approximations \eqref{eq:prototype2} and \eqref{eq:prototype1}. In contrast with the original problem \eqref{eq:pro1}, to analyze \eqref{eq:prototype2} we are able to use the Nehari minimization approach and obtain the existence result in Proposition \ref{prop:approx_first}. The equation \eqref{eq:prototype1} is treated via the global minimization method, see Proposition \ref{prop:approx_two}.

\section{Preliminaries}\label{sec:preliminaries}

Hereinafter, we denote by $L^p(\Omega)$ the standard Lebesgue space endowed with the norm $\|u\|_{L^p}:=\left(\int_\Omega |u|^p \, dx\right)^{1/p}$, $p>1$.
Let $\alpha = (\alpha_1, \dots, \alpha_N)$ be a multi-index of length $|\alpha| = \sum_{j=1}^{N} \alpha_j$. Denote the differential operator of order $|\alpha|$ as $D^\alpha = \partial^{\alpha_1}/\partial x_1^{\alpha_1} \cdots \partial^{\alpha_N}/\partial x_N^{\alpha_N}$.
We will work with the Sobolev spaces
$W^{k,p}(\Omega) = \{u: D^\alpha u \in L^p(\Omega) \text{ for all } |\alpha| \leq k\}$.
Note that $W^{k,p}(\Omega)$ is a reflexive Banach space which can be endowed (using the Gagliardo--Nirenberg interpolation inequality) with the norm
\begin{equation}\label{eq:norm}
\|u\|_{W^{k,p}} = \|u\|_{L^p} + \sum_{|\alpha| = k}\|D^\alpha u\|_{L^p}, 
\end{equation}
i.e., the intermediate derivatives are not involved. 
We denote by $W_0^{k,p}(\Omega)$ the closure of $C_0^\infty(\Omega)$ with respect to the norm \eqref{eq:norm}.
Notice, in particular, that $W_0^{2,2}(\Omega)$ is the Hilbert space which can be endowed with the equivalent norm 
$\|\Delta u\|_{L^2}$, see \cite[Theorem 2.2, p.\ 32]{gazzola2010}. Moreover, since we assume that $\partial \Omega \in C^{4,\eta}$ and satisfies a uniform outer ball condition, the Hilbert space $W^{2,2}(\Omega) \cap W_0^{1,2}(\Omega)$ can be also endowed with the norm $\|\Delta u\|_{L^2}$, see \cite[Theorem 2.31, p.\ 52]{gazzola2010}. We will mainly work with this norm.

We often employ the following compact embeddings (see, e.g., \cite[p.\ 168]{AF}):
\begin{itemize}
	\item If $N = 1, 2$, then $W^{2,2}(\Omega) \hookrightarrow\hookrightarrow W^{1,q}(\Omega)$ for any $q \in (1, +\infty)$.
	\item If $N \geq 3$, then $W^{2,2}(\Omega) \hookrightarrow\hookrightarrow W^{1,q}(\Omega)$ for any $q \in (1, 2^*)$, where $2^* := \frac{2N}{N-2}$.
\end{itemize}

\smallskip
Consider the eigenvalue problem
\begin{equation}\label{eq:eigenvalue}
\left\{
\begin{aligned}
&\Delta^2 u + \lambda \Delta u = 0 \quad \text{in } \Omega,\\
&u \text{ satisfies either } \eqref{eq:Dirichlet} \text{ or } \eqref{eq:Navier}.
\end{aligned}
\right.
\end{equation}
By the standard Courant-Fisher variational principle we can find a sequence of eigenvalues $\{\lambda_k\}_{k \in \mathbb{N}}$ and the sequence of associated eigenfunctions $\{\varphi_k\}_{k \in \mathbb{N}}$ such that $\{\varphi_1, \varphi_2, \dots\}$ forms an orthonormal basis of $X$.
In particular, the first eigenvalue is given by 
\begin{equation}\label{eq:mu1}
\lambda_1 := \inf_{v \in X \setminus \{0\}} \frac{\int_\Omega |\Delta v|^2 \, dx}{\int_\Omega |\nabla v|^2 \, dx}.
\end{equation}
Note that $\lambda_1 > 0$, as it follows from the above discussion about the equivalence of norms. 

\smallskip
It is easy to see from \eqref{eq:sol} that any nonzero critical point of $E_\omega$ belongs to the Nehari manifold 
\begin{equation*}\label{eq:nehari}
\mathcal{N}_\omega 
= 
\left\{
u \in  X\setminus \{0\}:  
\int_\Omega |\Delta u|^2 \, dx
-(\kappa-\omega^2) \int_\Omega |\nabla u|^2 \, dx 
+ \kappa \int_\Omega e^{-|\nabla u|^2} |\nabla u|^2 \, dx=0
\right\}.
\end{equation*}

\begin{lemma}\label{lem:ExNeh}
	Let $u \in X\setminus \{0\}$. There exists $t_u > 0$ such that $t_u u \in \mathcal{N}_\omega$ if and only if
	\begin{equation}\label{eq:H<0}
	\int_\Omega |\Delta u|^2 \, dx
	-(\kappa-\omega^2) \int_\Omega |\nabla u|^2 \, dx < 0.
	\end{equation}	
	Moreover, if $u$ satisfies \eqref{eq:H<0}, then there exists $\bar{t}_u > 0$ such that $E_\omega(t u) < 0$ for all $t > \bar{t}_u$.
\end{lemma}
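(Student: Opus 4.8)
The plan is to reduce everything to the behaviour of the one‑dimensional fibering map $\phi_u(t):=E_\omega(tu)$, $t>0$. For $u\in X\setminus\{0\}$ one computes
$$
\phi_u(t)=\frac{t^2}{2}\Bigl(\|\Delta u\|_{L^2}^2-(\kappa-\omega^2)\|\nabla u\|_{L^2}^2\Bigr)+\frac{\kappa}{2}\int_\Omega\bigl(1-e^{-t^2|\nabla u|^2}\bigr)\,dx,
$$
and therefore $\phi_u'(t)=t\,g_u(t)$ with
$$
g_u(t):=\|\Delta u\|_{L^2}^2-(\kappa-\omega^2)\|\nabla u\|_{L^2}^2+\kappa\int_\Omega e^{-t^2|\nabla u|^2}|\nabla u|^2\,dx.
$$
Since $\left<E_\omega'(tu),tu\right>=t\,\phi_u'(t)=t^2 g_u(t)$ by \eqref{eq:sol}, for $t>0$ the membership $tu\in\mathcal{N}_\omega$ is equivalent to $g_u(t)=0$. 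Hence the first assertion amounts to deciding when the scalar function $g_u$ vanishes on $(0,\infty)$.

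Next I would record the monotonicity of $g_u$. Put $a:=\|\Delta u\|_{L^2}^2$ and $b:=\|\nabla u\|_{L^2}^2$. Then $a>0$ because $\|\Delta\cdot\|_{L^2}$ is an equivalent norm on $X$, and $b>0$ since $\nabla u\equiv0$ would force $u$ to be constant, hence $u\equiv0$ by the boundary condition $u=0$ on $\partial\Omega$, a contradiction. The map $t\mapsto\int_\Omega e^{-t^2|\nabla u|^2}|\nabla u|^2\,dx$ is continuous, strictly decreasing on $(0,\infty)$ (because $e^{-t^2|\nabla u|^2}$ decreases strictly in $t$ on the positive‑measure set $\{\nabla u\neq0\}$), equals $b$ at $t=0$, and tends to $0$ as $t\to\infty$ by dominated convergence (the integrand is bounded by $|\nabla u|^2\in L^1(\Omega)$ and vanishes pointwise in the limit). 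Consequently $g_u$ is continuous and strictly decreasing with $g_u(0)=a+\omega^2 b>0$ and $\lim_{t\to\infty}g_u(t)=a-(\kappa-\omega^2)b$, which is exactly the left‑hand side of \eqref{eq:H<0}.

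The equivalence then follows: if \eqref{eq:H<0} holds, then $g_u(0)>0>\lim_{t\to\infty}g_u(t)$, so by the intermediate value theorem and strict monotonicity there is a (unique) $t_u>0$ with $g_u(t_u)=0$, i.e.\ $t_u u\in\mathcal{N}_\omega$; conversely, if $t_u u\in\mathcal{N}_\omega$ for some $t_u>0$, then $g_u(t_u)=0$, whence $\lim_{t\to\infty}g_u(t)\le g_u(t_u+1)<0$ by strict monotonicity, which is \eqref{eq:H<0}. For the last assertion of the lemma, assuming \eqref{eq:H<0} and writing $d:=a-(\kappa-\omega^2)b<0$, I would estimate $\phi_u(t)\le\frac{t^2}{2}d+\frac{\kappa}{2}|\Omega|$ using $0\le1-e^{-t^2|\nabla u|^2}\le1$; since $d<0$, the right‑hand side is negative for every $t>\bar t_u:=\sqrt{\kappa|\Omega|/|d|}$, so that $E_\omega(tu)=\phi_u(t)<0$ for all such $t$.

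I do not expect any serious obstacle: the argument is essentially a one‑variable calculus exercise based on the strict monotonicity of $g_u$. The only points needing a line of justification are the strict positivity $\|\nabla u\|_{L^2}>0$ (handled via the boundary condition $u=0$ on $\partial\Omega$) and the limit $\int_\Omega e^{-t^2|\nabla u|^2}|\nabla u|^2\,dx\to0$, a routine application of the dominated convergence theorem using that $\Omega$ is bounded.
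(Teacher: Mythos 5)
Your proposal is correct and follows essentially the same route as the paper: analyzing the fibering map $t\mapsto E_\omega(tu)$, comparing the sign of $\tfrac1t\tfrac{\partial}{\partial t}E_\omega(tu)$ at $t\to 0^+$ (where it equals $\|\Delta u\|_{L^2}^2+\omega^2\|\nabla u\|_{L^2}^2>0$) with its limit $\|\Delta u\|_{L^2}^2-(\kappa-\omega^2)\|\nabla u\|_{L^2}^2$ as $t\to+\infty$, and bounding the exponential term by $\kappa|\Omega|/2$ for the final assertion. The only (harmless) additions are the explicit strict monotonicity of $g_u$, which also yields uniqueness of $t_u$, and the explicit value of $\bar t_u$; the paper's converse instead observes directly that $g_u(t)>0$ for all $t$ when \eqref{eq:H<0} fails.
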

\begin{proof}
	Consider the fibering functional
	\begin{equation}\label{eq:fibering}
	E_\omega(tu) 
	= 
	\frac{t^2}{2} \left(\int_\Omega |\Delta u|^2 \, dx
	-(\kappa-\omega^2) \int_\Omega |\nabla u|^2 \, dx \right)
	+ \frac{\kappa}{2} \int_\Omega \left(1 - e^{- t^2 |\nabla u|^2}\right) dx,
	\end{equation}
	where $t \geq 0$.
	We see that 
	$$
	\frac{\partial}{\partial t}E_\omega(tu)
	= 
	t \left( \int_\Omega |\Delta u|^2 \, dx
	-(\kappa-\omega^2) \int_\Omega |\nabla u|^2 \, dx \right)
	+ \kappa t \int_\Omega e^{- t^2 |\nabla u|^2} |\nabla u|^2 \, dx,
	$$ 
	and hence $\mathcal{N}_\omega$ can be equivalently defined as 
	$$
	\mathcal{N}_\omega
	= 
	\left\{
	u \in  X \setminus \{0\}:~ \frac{\partial}{\partial t}E_\omega(tu) = 0 \text{ at } t = 1
	\right\}.
	$$
	
	Assume first that $u \in X\setminus \{0\}$ satisfies \eqref{eq:H<0}. 
	It is not hard to see that
	\begin{align*}
	&\frac{1}{t}\,\frac{\partial}{\partial t}E_\omega(t u) \to \int_\Omega |\Delta u|^2 \, dx +
	\omega^2 \int_\Omega |\nabla u|^2 \, dx > 0 
	&&\text{as } t \to 0,\\ 
	&\frac{1}{t}\,\frac{\partial}{\partial t}E_\omega(t u) \to \int_\Omega |\Delta u|^2 \, dx
	-(\kappa-\omega^2) \int_\Omega |\nabla u|^2 \, dx < 0
	&&\text{as } t \to +\infty.
	\end{align*}
	Thus, by continuity, there exists $t_u > 0$ such that $\frac{\partial}{\partial t}E_\omega(tu) = 0$ at $t=t_u$, which implies that $t_u u \in \mathcal{N}_\omega$.
	
	On the other hand, if \eqref{eq:H<0} does not hold, then $\frac{\partial}{\partial t}E_\omega(t u) > 0$ for all $t>0$, that is, $tu$ cannot belong to $\mathcal{N}_\omega$.
	
	If we assume that some $u$ satisfies \eqref{eq:H<0}, we can use the boundedness of the last integral in \eqref{eq:fibering} to deduce the existence of $\bar{t}_u > 0$ such that $E_\omega(t u) < 0$ for all $t > \bar{t}_u$.
\end{proof}

\begin{remark}\label{rem:nonexistence}
	The definition \eqref{eq:mu1} of the first eigenvalue $\lambda_1$ implies that \eqref{eq:H<0} can be satisfied if and only if $\kappa - \omega^2 > \lambda_1$. Thus,  $\mathcal{N}_\omega \neq \emptyset$ if and only if $\kappa - \omega^2 > \lambda_1$. 
	Therefore, recalling that any critical point of $E_\omega$ belongs to $\mathcal{N}_\omega$, we derive the nonexistence part of Theorem \ref{thm:1}.
\end{remark}

\begin{lemma}\label{lem:level}
	Any nonzero critical point $u$ of $E_\omega$ satisfies $E_\omega(u) \in \left(0, \frac{\kappa|\Omega|}{2}\right)$.
\end{lemma}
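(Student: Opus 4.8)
The plan is to exploit the fact, noted just before the statement, that every nonzero critical point $u$ of $E_\omega$ lies in the Nehari manifold $\mathcal{N}_\omega$; this lets us eliminate the sign-indefinite quadratic part of the energy. Writing the defining identity of $\mathcal{N}_\omega$ as
\begin{equation*}
\int_\Omega |\Delta u|^2 \, dx - (\kappa-\omega^2)\int_\Omega |\nabla u|^2 \, dx = -\kappa \int_\Omega e^{-|\nabla u|^2} |\nabla u|^2 \, dx,
\end{equation*}
and substituting into the formula for $E_\omega$, one obtains
\begin{equation*}
E_\omega(u) = \frac{\kappa}{2} \int_\Omega \left(1 - e^{-|\nabla u|^2}\bigl(1 + |\nabla u|^2\bigr)\right) dx.
\end{equation*}

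The next step is a pointwise analysis of the scalar function $g(s) := 1 - (1+s)e^{-s}$ for $s \ge 0$. Here $g(0) = 0$ and $g'(s) = s\,e^{-s} \ge 0$, so $g$ is nondecreasing, whence $g(s) \ge 0$ on $[0,\infty)$ with equality only at $s = 0$; moreover $g(s) < 1$ for every $s \ge 0$ because $(1+s)e^{-s} > 0$. Taking $s = |\nabla u|^2$ gives $0 \le g(|\nabla u|^2) < 1$ a.e.\ in $\Omega$.

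Finally, since $u \ne 0$ in $X$, Poincar\'e's inequality (equivalently, the equivalence of $\|\cdot\|_X$ with $\|\Delta\cdot\|_{L^2}$ recalled above) forces $\nabla u \not\equiv 0$, so $g(|\nabla u|^2) > 0$ on a set of positive measure and therefore $\int_\Omega g(|\nabla u|^2)\,dx > 0$; on the other hand $\int_\Omega g(|\nabla u|^2)\,dx < \int_\Omega 1\,dx = |\Omega|$ since $g < 1$ pointwise. Multiplying by $\kappa/2$ yields $E_\omega(u) \in \bigl(0,\tfrac{\kappa|\Omega|}{2}\bigr)$. I do not anticipate any real obstacle; the only point requiring a little care is the strictness of both inequalities, which is handled precisely by the observations $g(s) = 0 \iff s = 0$ and $g(s) < 1$ for all $s \ge 0$.
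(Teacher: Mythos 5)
Your proof is correct and follows essentially the same route as the paper: both use the Nehari identity (equivalently, $E_\omega(u)-\tfrac12\langle E_\omega'(u),u\rangle$) to reduce the energy to $\tfrac{\kappa}{2}\int_\Omega \bigl(1-e^{-|\nabla u|^2}(1+|\nabla u|^2)\bigr)dx$ and then bound the integrand pointwise. Your explicit remark that $\nabla u\not\equiv 0$ is needed for the strict lower bound is a small point the paper leaves implicit, but the argument is the same.
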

\begin{proof}
	Let $u$ be a nonzero critical point of $E_\omega$. 
	Recall that $u \in \mathcal{N}_\omega$ and notice that $0 < 1 - e^{- s^2}\left(1 + s^2\right) < 1$ for all $s>0$. Then, we derive the desired fact:
	$$
	E_\omega(u) = 
	E_\omega(u) - \frac{1}{2}\left<E'_\omega(u),u\right> = 
	\frac{\kappa}{2} \int_\Omega \left(1 - e^{- |\nabla u|^2}\left(1 + |\nabla u|^2\right) \right) dx \in \left(0, \frac{\kappa|\Omega|}{2}\right).
	$$
\end{proof}

\section{Ground state solution}\label{sec:ground_state}

In this section, we obtain the existence result of Theorem \ref{thm:1}. 
We begin with an auxiliary fact about the Palais--Smale condition for $E_\omega$.

\begin{lemma}\label{lem:PS}
	Assume that $(\kappa-\omega^2)$ is not an eigenvalue of the problem \eqref{eq:eigenvalue}.
	Then $E_\omega$ satisfies the following form of the Palais--Smale condition: if $\{u_n\}_{n \in \mathbb{N}} \subset X$ is such that 
	\begin{equation}\label{eq:PS}
	\|E_\omega'(u_n)\|_{X^*} \to 0 
	\quad \text{as }
	n \to +\infty,
	\end{equation}
	then $\{u_n\}_{n \in \mathbb{N}}$ has a strongly convergent subsequence in $X$ to a critical point of $E_\omega$.
\end{lemma}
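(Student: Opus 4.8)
The plan is to first show that \emph{every} sequence satisfying \eqref{eq:PS} is bounded in $X$ --- this is the only place where the hypothesis ``$\kappa-\omega^2$ is not an eigenvalue'' is used, together with the special boundedness of the nonlinear term --- and then to upgrade weak convergence to strong convergence by a routine argument based on the compact embedding $X\hookrightarrow\hookrightarrow W^{1,2}(\Omega)$.

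\textbf{Boundedness of a Palais--Smale sequence.} Since $\kappa-\omega^2$ is not an eigenvalue of \eqref{eq:eigenvalue}, the spectral decomposition associated to $\{\varphi_k\}$ yields an orthogonal splitting $X=X^-\oplus X^+$, where $X^-$ is the (finite-dimensional) span of the eigenfunctions with $\lambda_k<\kappa-\omega^2$ and $X^+$ is the closure of the span of the rest; this splitting is orthogonal both for $\int_\Omega \Delta u\,\Delta v\,dx$ and for $\int_\Omega(\nabla u,\nabla v)\,dx$ (if $\kappa-\omega^2<\lambda_1$ then simply $X=X^+$). Put $Q(v):=\int_\Omega|\Delta v|^2\,dx-(\kappa-\omega^2)\int_\Omega|\nabla v|^2\,dx$. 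The gap between consecutive eigenvalues gives constants $c,c'>0$ with $Q(v)\ge c\|v\|_X^2$ on $X^+$ and $Q(v)\le -c'\|v\|_X^2$ on $X^-$. Now write $u_n=u_n^-+u_n^+$ and test $E_\omega'(u_n)$ against $u_n^\pm$; by orthogonality the cross terms vanish, so $\langle E_\omega'(u_n),u_n^\pm\rangle=Q(u_n^\pm)+\kappa\int_\Omega e^{-|\nabla u_n|^2}(\nabla u_n,\nabla u_n^\pm)\,dx$. The crucial observation is that $s\mapsto s\,e^{-s^2}$ is bounded on $[0,\infty)$, whence $|e^{-|\nabla u_n|^2}(\nabla u_n,\nabla u_n^\pm)|\le|\nabla u_n^\pm|$, so by H\"older's inequality and $\|\nabla v\|_{L^2}^2\le\lambda_1^{-1}\|\Delta v\|_{L^2}^2$ this nonlinear term is bounded by $C\|u_n^\pm\|_X$ with $C$ independent of $n$. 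Together with $|\langle E_\omega'(u_n),u_n^\pm\rangle|\le\|E_\omega'(u_n)\|_{X^*}\|u_n^\pm\|_X$ and \eqref{eq:PS} this gives $c\|u_n^+\|_X^2\le Q(u_n^+)\le(\varepsilon_n+C)\|u_n^+\|_X$ and $c'\|u_n^-\|_X^2\le -Q(u_n^-)\le(\varepsilon_n+C)\|u_n^-\|_X$ with $\varepsilon_n:=\|E_\omega'(u_n)\|_{X^*}\to0$, hence $\|u_n^\pm\|_X$, and thus $\|u_n\|_X$, is bounded. Note that boundedness of $E_\omega(u_n)$ was never used; this is precisely because the linear part of $E_\omega'$ grows linearly while the nonlinear part stays bounded, which is also why the stated ``energy-free'' form of the Palais--Smale condition is correct.

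\textbf{Strong convergence.} Passing to a subsequence, $u_n\rightharpoonup u$ in $X$, and by the compact embedding $X\hookrightarrow\hookrightarrow W^{1,2}(\Omega)$ we get $\nabla u_n\to\nabla u$ in $L^2(\Omega)$ and, along a further subsequence, a.e.\ in $\Omega$. Consider $\langle E_\omega'(u_n)-E_\omega'(u),u_n-u\rangle$, which tends to $0$ because $\|E_\omega'(u_n)\|_{X^*}\to0$ with $\{u_n-u\}$ bounded, while $u_n-u\rightharpoonup0$ in $X$ and $E_\omega'(u)\in X^*$. Expanding, this quantity equals $\|\Delta(u_n-u)\|_{L^2}^2-(\kappa-\omega^2)\|\nabla(u_n-u)\|_{L^2}^2+\kappa\int_\Omega\bigl(e^{-|\nabla u_n|^2}\nabla u_n-e^{-|\nabla u|^2}\nabla u,\,\nabla(u_n-u)\bigr)\,dx$. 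The second term vanishes since $\nabla u_n\to\nabla u$ in $L^2$; for the third, $e^{-|\nabla u_n|^2}\nabla u_n\to e^{-|\nabla u|^2}\nabla u$ in $L^2(\Omega)$ by dominated convergence (the maps are uniformly bounded, $|\Omega|<\infty$, and convergence holds a.e.), so pairing it with the null sequence $\nabla(u_n-u)$ it vanishes as well. Hence $\|\Delta(u_n-u)\|_{L^2}\to0$, i.e., $u_n\to u$ strongly in $X$, and by continuity of $E_\omega'$ we obtain $E_\omega'(u)=0$.

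\textbf{Main obstacle.} The only nonroutine step is the boundedness of the Palais--Smale sequence: everything there rests on combining the spectral-gap estimates for the indefinite form $Q$ (available exactly because $\kappa-\omega^2$ avoids the spectrum) with the fact that the nonlinear term in $E_\omega'$ contributes only linear growth in $\|u_n^\pm\|_X$, owing to the boundedness of $s\mapsto se^{-s^2}$. The subsequent compactness argument is entirely standard for functionals of this type.
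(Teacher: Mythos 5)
Your proof is correct, and the nontrivial part --- boundedness of the Palais--Smale sequence --- is done by a genuinely different route than in the paper. The paper argues by contradiction: assuming $\|\Delta u_n\|_{L^2}\to+\infty$ it normalizes $v_n:=u_n/\|\Delta u_n\|_{L^2}$, shows via a careful splitting of $\Omega$ into $\{|\nabla v_n|\le C\}$ and $\{|\nabla v_n|>C\}$ (with $C=\|\Delta u_n\|_{L^2}^{-1/2}$) that the exponential term divided by $\|\Delta u_n\|_{L^2}$ vanishes, and concludes that the weak limit of $v_n$ would be a nontrivial eigenfunction of \eqref{eq:eigenvalue} with eigenvalue $\kappa-\omega^2$, contradicting the non-resonance hypothesis. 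You instead use the hypothesis through the bi-orthogonal splitting $X=X^-\oplus X^+$ along the eigenbasis and the resulting sign-definite estimates for the quadratic form $Q$ on each component, combined with the observation that $se^{-s^2}$ is bounded so the nonlinear part of $E_\omega'$ contributes only $O(\|u_n^\pm\|_X)$; this yields a direct, quantitative a priori bound on $\|u_n\|_X$ rather than a soft contradiction. Both arguments hinge on exactly the same two ingredients (non-resonance plus boundedness of the exponential nonlinearity), but yours avoids the delicate limit analysis of the rescaled exponential term, at the cost of invoking the completeness and double orthogonality of the eigenbasis (which the paper does assert in Section~\ref{sec:preliminaries}, so this is legitimate). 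The strong-convergence step is essentially the same in both proofs; note that the paper additionally verifies $u\not\equiv0$ when $\|\Delta u_n\|_{L^2}$ stays away from zero, which is not needed for the statement as written and which you rightly omit.
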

\begin{proof}
	Let $\{u_n\}_{n \in \mathbb{N}}$ satisfy \eqref{eq:PS}.
	First we prove that $\{u_n\}_{n \in \mathbb{N}}$ is bounded in $X$. Suppose, by contradiction, that $\|\Delta u_n\|_{L^2} \to +\infty$ as $n \to +\infty$. 
	Let $\{v_n\}_{n \in \mathbb{N}}$ be a normalized sequence such that $u_n = \|\Delta u_n\|_{L^2} \,  v_n$ for $n \in \mathbb{N}$. 
	Then, we see from \eqref{eq:PS} that
	\begin{align}
	\label{eq:normalized_energy_tend_to_zero}
	&\frac{\left|\left<E_\omega'(u_n), \varphi\right>\right|}{\|\Delta u_n\|_{L^2}} \\
	\notag
	&= \left|\int_\Omega \Delta v_n \Delta \varphi \, dx
	-(\kappa-\omega^2) \int_\Omega (\nabla v_n, \nabla \varphi)\,dx
	+\kappa \int_\Omega e^{- \|\Delta u_n\|_{L^2}^2 |\nabla v_n|^2} (\nabla v_n, \nabla \varphi) \, dx\right|
	\to 0
	\end{align}
	as $n \to +\infty$ for any $\varphi \in X$. 
	Let us show that the last integral converges to zero. 		
	To this end, we take any $C > 0$ and make a decomposition
	\begin{align*}
	&\int_\Omega e^{- \|\Delta u_n\|_{L^2}^2 |\nabla v_n|^2} (\nabla v_n, \nabla \varphi) \, dx \\
	&= \int_{\{|\nabla v_n| \leq C\}} e^{- \|\Delta u_n\|_{L^2}^2 |\nabla v_n|^2} (\nabla v_n, \nabla \varphi) \,dx 
	+ 
	\int_{\{|\nabla v_n| > C\}} e^{- \|\Delta u_n\|_{L^2}^2 |\nabla v_n|^2} (\nabla v_n, \nabla \varphi) \, dx.
	\end{align*}
	Since $e^{-s^2} \leq 1$ for all $s \in \mathbb{R}$, we obtain 
	\begin{align*}
	\bigg|\int_{\{|\nabla v_n| \leq C\}} e^{- \|\Delta u_n\|_{L^2}^2 |\nabla v_n|^2} (\nabla v_n, \nabla \varphi) \, dx \bigg| \leq
	\int_{\{|\nabla v_n| \leq C\}} |\nabla v_n| |\nabla \varphi| \, dx  
	\\
	\leq 
	C \int_{\{|\nabla v_n| \leq C\}} |\nabla \varphi| \, dx  \leq C \, |\Omega|^{1/2} \, \|\nabla \varphi\|_{L^2}.	
	\end{align*}
	On the other hand, since $e^{-s^2}$ is strictly decreasing for $s>0$, we get
	\begin{align*}
	\bigg|\int_{\{|\nabla v_n| > C\}} e^{- \|\Delta u_n\|_{L^2}^2 |\nabla v_n|^2} (\nabla v_n, \nabla \varphi) \, dx\bigg|
	\leq 
	e^{- \|\Delta u_n\|_{L^2}^2 C^2} \int_{\{|\nabla v_n| > C\}} |\nabla v_n| |\nabla \varphi| \, dx \\
	\leq e^{- \|\Delta u_n\|_{L^2}^2 C^2} \|\nabla v_n\|_{L^2} \|\nabla \varphi\|_{L^2} \leq c_1 \,  e^{- \|\Delta u_n\|_{L^2}^2 C^2},
	\end{align*}
	where $c_1 > 0$ depends neither on $C$ nor on $n \in \mathbb{N}$ due to $\|\Delta v_n\|_{L^2} = 1$.
	Therefore, taking $C = \|\Delta u_n\|_{L^2}^{-1/2}$ and letting $n \to +\infty$, we get
	\begin{align*}
	\left|\int_\Omega e^{- \|\Delta u_n\|_{L^2}^2 |\nabla v_n|^2}  (\nabla v_n, \nabla \varphi) \, dx \right| 
	&\leq \|\Delta u_n\|_{L^2}^{-1/2}\, |\Omega|^{1/2} \,\|\nabla \varphi\|_{L^2} + c_1 \, e^{- \|\Delta u_n\|_{L^2}}
	\to 0.
	\end{align*}
	
	Thus, \eqref{eq:normalized_energy_tend_to_zero} leads to 
	\begin{equation}\label{eq:v_eigen}
	\int_\Omega \Delta v_n \Delta \varphi \, dx 
	-(\kappa-\omega^2) \int_\Omega (\nabla v_n, \nabla \varphi) \, dx \to 0
	\quad \text{for all } \varphi \in X
	\text{ as } n \to +\infty.
	\end{equation}
	Since $\{v_n\}_{n \in \mathbb{N}}$ is bounded in $X$, 
	the Banach--Alaoglu and Sobolev theorems imply that $v_n$ converges (up to a subsequence) weakly in $X$ and strongly in $W^{1,2}(\Omega)$ to some $v \in X$.
	Note that $v \not\equiv 0$ a.e.\ in $\Omega$. Indeed, we deduce from \eqref{eq:PS} that
	$$
	\frac{\left|\left<E_\omega'(u_n), v_n\right>\right|}{\|\Delta u_n\|_{L^2}} 
	= 
	\left|
	1
	-(\kappa-\omega^2) \int_\Omega |\nabla v_n|^2 \, dx
	+\kappa \int_\Omega e^{- \|\Delta u_n\|_{L^2}^2 |\nabla v_n|^2} |\nabla v_n|^2 \, dx
	\right|
	\to 0
	$$
	as $n \to +\infty$. Arguing as above, we see that the last integral converges to zero. This implies that $\|\nabla v_n\|_{L^2} > c_2 > 0$ for some $c_2 > 0$ and all $n \in \mathbb{N}$, which gives $v \not\equiv 0$ a.e.\ in $\Omega$ due to the strong convergence in $W^{1,2}(\Omega)$.
	Therefore, we conclude from \eqref{eq:v_eigen} that $(\kappa-\omega^2)$ is an eigenvalue of the problem \eqref{eq:eigenvalue} with the associated eigenfunction $v$. However, this contradicts the assumption of the lemma and hence it proves that $\{u_n\}_{n \in \mathbb{N}}$ is bounded in $X$.
	
	Since $\{u_n\}_{n \in \mathbb{N}}$ is bounded, we can extract a subsequence which converges weakly in $X$ and strongly in $W^{1,2}(\Omega)$ to some function $u \in X$. 	
	If we assume that $\|\Delta u_n\|_{L^2} \to 0$ as $n \to +\infty$, then $u_n$ converges strongly in $X$ to a zero critical point of $E_\omega$ and the proof is complete.
	Now assume that $\|\Delta u_n\|_{L^2} > c_3 >0$ for some $c_3 > 0$ and all $n \in \mathbb{N}$.
	Let us prove that $u \not\equiv 0$ a.e.\ in $\Omega$.  
	By compactness of the embedding $X \hookrightarrow\hookrightarrow W^{1,2}(\Omega)$ it is enough to show that there is $c_4 > 0$ such that for all $n \in \mathbb{N}$ we have $\|\nabla u_n\|_{L^2} \geq c_4$.
	Suppose, by contradiction, that $\|\nabla u_n\|_{L^2} \to 0$ as $n \to +\infty$.
	Recalling that $\{u_n\}_{n \in \mathbb{N}}$ is bounded, we deduce from \eqref{eq:PS} that 
	$$
	\left|\int_\Omega |\Delta u_n|^2 \, dx -
	(\kappa-\omega^2)\int_\Omega |\nabla u_n|^2 \, dx
	+\kappa \int_\Omega e^{- |\nabla u_n|^2} |\nabla u_n|^2 \, dx\right|
	=
	\left|\left<E_\omega'(u_n), u_n\right>\right|
	\to 0
	$$
	as $n \to +\infty$, which contradicts $\|\Delta u_n\|_{L^2} > c_3 >0$. 
	Therefore, $u \not\equiv 0$ a.e.\ in $\Omega$.
	
	Finally, let us show that $u_n \to u$ strongly in $X$.
	First, we see from \eqref{eq:PS} and the boundedness of $\{u_n\}_{n \in \mathbb{N}}$ in $X$ that 
	$$
	\left|\left< E_\omega'(u_n), u_n-u \right>\right| \to 0
	\quad \text{as } n \to +\infty.
	$$
	Therefore, in view of the strong convergence of $u_n$ to $u$ in $W^{1,2}(\Omega)$, we get
	\begin{equation}\label{eq:conv_1}
	\int_\Omega \Delta u_n \, \Delta(u_n-u) \, dx \to 0
	\quad \text{as } n \to +\infty.
	\end{equation}
	On the other hand, the weak convergence of $u_n$ to $u$ in $X$ implies 
	\begin{equation}\label{eq:conv_2}
	\int_\Omega \Delta u \, \Delta(u_n-u) \, dx \to 0
	\quad \text{as } n \to +\infty.
	\end{equation}
	Thus, subtracting \eqref{eq:conv_2} from \eqref{eq:conv_1}, we get the desired strong convergence of $u_n$ to $u$ in $X$.	
\end{proof}

The existence part of Theorem \ref{thm:1} is given in the following proposition.
\begin{prop}\label{lem:1}
	Assume that $\lambda_{k} < \kappa - \omega^2 < \lambda_{k+1}$ for some $k \in \mathbb{N}$.
	Then there exists a nonzero ground state solution of \eqref{eq:pro1} with positive energy, which is critical point of $E_\omega$ of saddle type.
\end{prop}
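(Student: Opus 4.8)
The plan is to realise the ground state as a mountain-pass critical point of $E_\omega$ and then to single out, among all nonzero critical points, the one of least energy by means of the Palais--Smale condition of Lemma~\ref{lem:PS}. The first step is to check that $E_\omega$ has the mountain-pass geometry around the origin. Although in the present regime $\kappa-\omega^2 > \lambda_k \geq \lambda_1$ the quadratic part of $E_\omega$ is \emph{not} positive definite, the origin is still a strict local minimum, since the nonlinearity compensates the bad sign at leading order. Writing $1 - e^{-|\nabla u|^2} = |\nabla u|^2 - |\nabla u|^2 g(|\nabla u|^2)$ with $g(s) := 1 - (1 - e^{-s})/s$, so that $0 \leq g \leq 1$ and $g(s) \to 0$ as $s \to 0^+$, one has the identity
\begin{equation*}
E_\omega(u) = \tfrac12\int_\Omega|\Delta u|^2\,dx + \tfrac{\omega^2}{2}\int_\Omega|\nabla u|^2\,dx - \tfrac\kappa2\int_\Omega|\nabla u|^2\, g\!\left(|\nabla u|^2\right) dx.
\end{equation*}
Splitting $\Omega$ into the region where $|\nabla u|$ is small (there $g$ is as small as we like) and its complement (there $|\nabla u|^2$ is absorbed by $|\nabla u|^p$ for a fixed $p > 2$ admissible in the embedding $W^{2,2}(\Omega) \hookrightarrow W^{1,p}(\Omega)$, which exists for every $N$), and using the equivalence of the norm of $X$ with $\|\Delta\cdot\|_{L^2}$, one finds $\rho, \alpha > 0$ such that $E_\omega(u) \geq \alpha$ whenever $\|\Delta u\|_{L^2} = \rho$. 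On the other hand, since $\kappa-\omega^2 > \lambda_1$, the first eigenfunction $\varphi_1$ satisfies \eqref{eq:H<0}, so Lemma~\ref{lem:ExNeh} provides $e := t^*\varphi_1$ with $t^*$ so large that $\|\Delta e\|_{L^2} > \rho$ and $E_\omega(e) < 0 = E_\omega(0)$. Finally, $\kappa-\omega^2 \in (\lambda_k, \lambda_{k+1})$ is not an eigenvalue, so Lemma~\ref{lem:PS} supplies the Palais--Smale condition, and the mountain-pass theorem of Ambrosetti and Rabinowitz \cite{AR} yields a critical point $u_0$ of $E_\omega$ at a level $c \geq \alpha > 0$; in particular $u_0 \neq 0$, so the set of nonzero critical points of $E_\omega$ is nonempty.

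To obtain a ground state, set $c^* := \inf\{E_\omega(v): v \in X\setminus\{0\},\ E_\omega'(v) = 0\}$, which by the previous step is well defined and, by Lemma~\ref{lem:level}, satisfies $0 \leq c^* < \tfrac{\kappa|\Omega|}{2}$. A minimizing sequence $\{v_n\}_{n\in\mathbb{N}}$ is trivially a Palais--Smale sequence, since $E_\omega'(v_n) = 0$, so by Lemma~\ref{lem:PS} it converges, along a subsequence, strongly in $X$ to some $v$ with $E_\omega'(v) = 0$ and $E_\omega(v) = c^*$. To exclude $v = 0$, I would rerun the estimate above on
\begin{equation*}
\langle E_\omega'(u), u\rangle = \int_\Omega|\Delta u|^2\,dx + \omega^2\int_\Omega|\nabla u|^2\,dx - \kappa\int_\Omega\left(1 - e^{-|\nabla u|^2}\right)|\nabla u|^2\,dx,
\end{equation*}
obtaining $\rho_1 > 0$ such that $\langle E_\omega'(u), u\rangle > 0$ for every $u$ with $0 < \|\Delta u\|_{L^2} \leq \rho_1$. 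Since every nonzero critical point lies in $\mathcal{N}_\omega$, this forces $\|\Delta v_n\|_{L^2} > \rho_1$ for all $n$, hence $\|\Delta v\|_{L^2} \geq \rho_1 > 0$ by the strong convergence. Thus $v$ is a nonzero critical point with $E_\omega(v) = c^*$, and then Lemma~\ref{lem:level} gives $c^* > 0$; consequently $v$ is a ground state solution with positive energy.

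It remains to see that $v$ (and, likewise, $u_0$) is of saddle type. Since $v \in \mathcal{N}_\omega$ and $v \neq 0$, the quantity $\tfrac1t\,\tfrac{\partial}{\partial t}E_\omega(tv)$ equals a constant plus the strictly decreasing function $t \mapsto \kappa\int_\Omega e^{-t^2|\nabla v|^2}|\nabla v|^2\,dx$ and vanishes at $t = 1$, so $t = 1$ is the strict global maximum of $t \mapsto E_\omega(tv)$; in particular $v$ is not a local minimum of $E_\omega$. For the eigenfunctions $\varphi_j$, using $(\nabla v, \nabla\varphi_j)^2 \leq |\nabla v|^2|\nabla\varphi_j|^2$ and $s e^{-s} \leq e^{-1}$ one computes
\begin{equation*}
\langle E_\omega''(v)\varphi_j, \varphi_j\rangle \geq \left(\lambda_j - (\kappa-\omega^2) - 2\kappa e^{-1}\right)\int_\Omega|\nabla\varphi_j|^2\,dx,
\end{equation*}
which is positive for $j$ large because $\lambda_j \to +\infty$; since $\langle E_\omega'(v), \varphi_j\rangle = 0$, this shows $v$ is not a local maximum. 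Hence $v$ is a critical point of saddle type.

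The hard part is the verification of the mountain-pass geometry near the origin for arbitrary $N$: since $1 - e^{-|\nabla u|^2}$ agrees with $|\nabla u|^2$ only to second order, the positivity of $E_\omega$ on small spheres is genuinely delicate once $\kappa-\omega^2 > \lambda_1$, and when $N \geq 5$ one cannot simply invoke $W^{2,2}(\Omega) \hookrightarrow W^{1,4}(\Omega)$ --- the remedy is the sublevel/superlevel splitting, which works for every $N$ because $2^* > 2$. The very same estimate, reused for $\langle E_\omega'(u), u\rangle$, is what prevents the minimizing sequence for $c^*$ from collapsing to the origin, and it is essentially for this reason that a direct minimization over the Nehari manifold (which is nonempty but bounded away from $0$) is awkward here and is replaced by the mountain-pass argument.
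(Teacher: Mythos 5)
Your proof is correct, and while the overall skeleton (mountain-pass geometry plus the Palais--Smale condition of Lemma~\ref{lem:PS}) coincides with the paper's, you diverge on three technical points, each in a defensible and arguably more transparent way. For condition $(A_1)$ the paper argues by contradiction: a failing sequence on small spheres is projected onto $\mathcal{N}_\omega$, producing Nehari elements $w_m$ with $\|\Delta w_m\|_{L^2}\to 0$, and the normalized Nehari identity $1+\omega^2\int_\Omega|\nabla v_m|^2\,dx+\kappa F(m)=0$ with $F(m)\to 0$ gives the contradiction. You instead prove the coercivity on small spheres directly, via the exact cancellation $1-e^{-s}=s-sg(s)$ (which turns the indefinite quadratic part into $\tfrac{\omega^2}{2}\|\nabla u\|_{L^2}^2\geq 0$ plus a superquadratic remainder) and the sublevel/superlevel splitting of $\Omega$ with the embedding $W^{2,2}\hookrightarrow W^{1,p}$ for some $p\in(2,2^*)$; this yields the quantitative bound $E_\omega(u)\geq\tfrac14\|\Delta u\|_{L^2}^2$ on small spheres, works for every $N$, and its analogue for $\langle E_\omega'(u),u\rangle$ gives you for free that $\mathcal{N}_\omega$ is bounded away from the origin --- precisely the fact the paper extracts from its contradiction argument. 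For the ground-state property, the paper compares the mountain-pass level $\beta$ with $E_\omega(v)$ for any competing solution $v$ by running the admissible path $s\mapsto st_2v$ through the fibering maximum at $t=1$; you instead minimize $E_\omega$ directly over the set of nonzero critical points, using that such a minimizing sequence is automatically a Palais--Smale sequence in the sense of Lemma~\ref{lem:PS} and that its limit cannot vanish because of the $\rho_1$-bound. Both are valid; the paper's route additionally identifies the ground-state level with the mountain-pass level $\beta$ (used later in Remark~\ref{rem:crit1}), which your argument does not immediately give, though it follows a posteriori from your fibering observation. Finally, you verify the saddle-type claim explicitly (not a local minimum via the strict fibering maximum at $t=1$; not a local maximum via $\langle E_\omega''(v)\varphi_j,\varphi_j\rangle>0$ for large $j$, using $se^{-s}\leq e^{-1}$ and $\lambda_j\to+\infty$), whereas the paper inherits it from the refined mountain-pass theorem in Struwe; your verification is a welcome addition since it applies to the minimizer you construct, which is not a priori the mountain-pass point.
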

\begin{proof}
	To obtain the claim, we will show that $E_\omega$ has the mountain-pass geometry. 
	Since $E_\omega(0)=0$, it is sufficient to check assumptions $(A_1)$ and $(A_2)$ below.
	
	$(A_1)$ 
	\textit{There exists $\rho>0$ and $\alpha>0$ such that $E_\omega(u) \geq \alpha$ for all $u \in X$ with $\|\Delta u\|_{L^2} = \rho$.}
	
	We argue by contradiction. Let $\rho>0$ be arbitrary and for any $n \in \mathbb{N}$ there exists $u_n \in X$ such that $\|\Delta u_n\|_{L^2} = \rho$ and $E_\omega(u_n) < 1/n$. 
	By the Banach--Alaoglu and Sobolev theorems we may assume that $u_n$ converges (up to a subsequence) to some $u_\rho \in X$ weakly in $X$ and strongly in $W^{1,2}(\Omega)$. Thus, $\|\Delta u_\rho\|_{L^2} \leq \rho$ and $E_\omega(u_\rho) \leq 0$ by the weak lower-semicontinuity of $E_\omega$. Moreover, $u_\rho$ is not zero. Indeed, 
	$$
	\frac{\rho^2}{2} - \frac{\kappa - \omega^2}{2} \|\nabla u_n\|_{L^2}^2 < E_\omega(u_n) < \frac{1}{n},
	$$
	which yields $\|\nabla u_n\|_{L^2} \geq c > 0$ for some $c > 0$ and all $n \in \mathbb{N}$. Consequently, the strong convergence in $W^{1,2}(\Omega)$ implies that $u_\rho \not\equiv 0$ a.e.\ in $\Omega$. 
	Moreover, the inequality $E_\omega(u_\rho) \leq 0$ leads to
	$$
	\int_\Omega |\Delta u_\rho|^2 \, dx 
	-(\kappa-\omega^2) \int_\Omega |\nabla u_\rho|^2 \, dx \leq 
	- \kappa \int_\Omega \left(1 - e^{- |\nabla u_\rho|^2}\right) dx < 0.
	$$
	Hence, applying Lemma \ref{lem:ExNeh}, we can find $t_\rho \in (0,1)$ such that $t_\rho u_\rho \in \mathcal{N}_\omega$. Note that $\|\Delta (t_\rho u_\rho)\|_{L^2} \leq \rho$.
	
	Choose now $\rho = 1/m$ and set $w_m := t_{1/m} u_{1/m}$, $m \in \mathbb{N}$.
	Then $w_m \in \mathcal{N}_\omega$ and $\|\Delta w_m\|_{L^2} \leq 1/m$.
	We normalize $w_m$ as $w_m = \|\Delta w_m\|_{L^2} \, v_m$ where  $\|\Delta v_m\|_{L^2} = 1$ for each $m \in \mathbb{N}$. 
	Let us show that
	\begin{equation*}
	F(m) := \int_\Omega |\nabla v_m|^2 \left(e^{- \|\Delta w_m\|_{L^2}^2 |\nabla v_m|^2} - 1\right) dx
	\to 0
	\quad \text{as }
	m \to +\infty.
	\end{equation*}
	To this end, we take any $C > 0$ and split the integral over $\Omega$ as follows:
	\begin{align*}
	F(m) &= \int_{\{|\nabla v_m| \leq C\}} |\nabla v_m|^2 \left(e^{- \|\Delta w_m\|_{L^2}^2 |\nabla v_m|^2} - 1\right) dx \\
	&+ 
	\int_{\{|\nabla v_m| > C\}} |\nabla v_m|^2 \left(e^{- \|\Delta w_m\|_{L^2}^2 |\nabla v_m|^2} - 1\right) dx.
	\end{align*}
	Since $e^{-s^2} \leq 1$ for $s \in \mathbb{R}$, $e^{-s^2}$ is strictly decreasing for $s \geq 0$, and due to $\|\Delta w_m\|_{L^2} \to 0$, we obtain 
	\begin{equation}
	\label{eq:est1}
	\bigg|\int_{\{|\nabla v_m| \leq C\}} |\nabla v_m|^2 \left(e^{- \|\Delta w_m\|_{L^2}^2 |\nabla v_m|^2} - 1\right) dx \bigg| 
	\leq 
	\left(1 - e^{- \|\Delta w_m\|_{L^2}^2 C^2} \right) \int_{\Omega} |\nabla v_m|^2 \, dx \to 0
	\end{equation}
	as $m \to +\infty$.
	On the other hand, we get
	\begin{equation}
	\label{eq:est2}
	\bigg|\int_{\{|\nabla v_m| > C\}} |\nabla v_m|^2 \left(e^{- \|\Delta w_m\|_{L^2}^2 |\nabla v_m|^2} - 1\right) dx\bigg|
	\leq 
	2 \int_{\{|\nabla v_m| > C\}} |\nabla v_m|^2 \, dx
	\end{equation}
	for all $m \in \mathbb{N}$, and
	\begin{equation}
	\label{eq:est3}
	\int_{\{|\nabla v_m| > C\}} |\nabla v_m|^2 \, dx 
	\to
	\int_{\{|\nabla v| > C\}} |\nabla v|^2 \, dx 
	\quad
	\text{as }	
	m \to +\infty.
	\end{equation}
	Indeed, to justify the last convergence, let us denote by $j_m(x) := \chi_{\{|\nabla v_m(x)| > C\}}$ the characteristic function of the set $\{|\nabla v_m| > C\}$. 
	We show that $|\nabla v_m|\,j_m \to |\nabla v|\,j$ strongly in $L^2(\Omega)$.
	We have
	\begin{align*}
	\int_\Omega \left(|\nabla v_m|\,j_m - |\nabla v|\,j\right)^2 dx 
	&= 
	\int_\Omega \left(\left(|\nabla v_m| - |\nabla v|\right) j_m + |\nabla v| \left(j_m - j \right)\right)^2 dx \\
	&\leq
	2\int_\Omega \left(|\nabla v_m| - |\nabla v|\right)^2 \, dx + 2\int_\Omega |\nabla v|^2 \left(j_m - j\right)^2 dx.
	\end{align*}
	The first integral on the right-hand side converges to $0$ due to the strong convergence $|\nabla v_m| \to |\nabla v|$ in $L^2(\Omega)$. 
	At the same time, we have $|\nabla v_m| \to |\nabla v|$ a.e.\ in  $\Omega$, and hence $j_m \to j$ and $|\nabla v|^2 \left(j_m - j\right) \to 0$ a.e.\ in $\Omega$. 
	Therefore, applying the Lebesgue dominated convergence theorem, we deduce that the second integral on the right-hand side also converges to $0$. 
	Thus, we have obtained the desired claim.
	
	Finally, \eqref{eq:est1}, \eqref{eq:est2}, and \eqref{eq:est3} yield
	\begin{equation*}
	\lim\limits_{m \to +\infty}|F(m)| 
	\leq 2 \int_{\{|\nabla v| > C\}} |\nabla v|^2 \, dx
	\quad
	\text{for any }
	C > 0.
	\end{equation*}	
	Recalling that $|\nabla v| \in L^2(\Omega)$, we let $C \to +\infty$ to conclude that $\lim\limits_{m \to +\infty}|F(m)| = 0$.	
	Therefore, since $w_m \in \mathcal{N}_\omega$ for each $m \in \mathbb{N}$, we get
	\begin{align*}
	0 
	= \frac{1}{\|\Delta w_m\|_{L^2}} \left.\frac{\partial}{\partial t} E_\omega(t v_m)\right|_{t=\|\Delta w_m\|_{L^2}}
	&=
	\int_\Omega |\Delta v_m|^2 \, dx
	-(\kappa-\omega^2) \int_\Omega |\nabla v_m|^2 \, dx\\
	+ \kappa \int_\Omega e^{- \|\Delta w_m\|_{L^2}^2 |\nabla v_m|^2} |\nabla v_m|^2 \, dx
	&= 1 + \omega^2 \int_\Omega |\nabla v_m|^2 \, dx +\kappa \, F(m)
	> 0
	\end{align*}
	for all sufficiently large $m \in \mathbb{N}$. 
	A contradiction. 
	Thus, we have shown $(A_1)$.	
	
	$(A_2)$ 
	\textit{There exists $\varphi \in X \setminus \{0\}$ such that $\|\Delta \varphi\|_{L^2} \geq \rho$ and $E_\omega(\varphi) < \alpha$, where $\rho$ and $\alpha$ as in $(A_1)$.}
	
	Recalling that $\kappa-\omega^2 > \lambda_1$ and considering $t \varphi_1$, where $\varphi_1$ is the first eigenfunction associated to $\lambda_1$, we get $E_\omega(t_1 \varphi_1) < 0$ for a sufficiently large $t_1 > 0$, see Lemma \ref{lem:ExNeh}.
	
	\smallskip	
	Hence, $E_\omega$ has the mountain-pass geometry. 
	Moreover, Lemma \ref{lem:PS} implies that $E_\omega$ satisfies the Palais--Smale condition.
	Consequently, applying \cite[Theorem 6.1]{struwe}, we see that $E_\omega$ possesses a mountain-pass-type critical point $u$ such that 
	\begin{equation}\label{eq:critlevel}
	E_\omega(u) = \beta = \inf_{g \in \Gamma} \max_{s \in [0,1]} E_\omega(g(s)) \geq \alpha > 0,
	\end{equation}
	where $\alpha$ is given in $(A_1)$ and 
	$$
	\Gamma := \{g \in C([0,1],X):~ g(0) = 0,~ E_\omega(g(1)) < 0\}.
	$$
	
	Let us show that $u$ is a ground state solution of \eqref{eq:pro1}. 
	Indeed, suppose, by contradiction, that there exists a solution $v$ such that $E_\omega(v) < E_\omega(u) = \beta$. According to Lemma \ref{lem:ExNeh}, we can find $t_2 > 0$ such that $E_\omega(t_2 v) < 0$. Taking $g(s) = s \, t_2 v$, we see that $g \in \Gamma$ and $\max\limits_{s \in [0,1]} E_\omega(g(s)) = E_\omega(v) < \beta$, which contradicts the definition of $\beta$.
\end{proof}

\begin{remark}
	The ground state solution $u$ of \eqref{eq:pro1} obtained in Proposition \ref{lem:1} is a solution of the minimization problem 
	\begin{equation}\label{N11}
	E_\omega(u) = \min\{E_\omega(v):~ v \in \mathcal{N}_\omega\}.
	\end{equation}
	Indeed, if we suppose, by contradiction, that there exists $v \in \mathcal{N}_\omega$ such that $E_\omega(v) < E_\omega(u) = \beta$, then, as at the end of the proof of Proposition \ref{lem:1}, we can consider $g(s) = s \, t_2 v \in \Gamma$, where $t_2 > 0$ is such that $E_\omega(t_2 v) < 0$. Therefore, by Lemma \ref{lem:ExNeh}, $\max\limits_{s \in [0,1]} E_\omega(g(s)) = E_\omega(v) < \beta$, which contradicts the definition of $\beta$.
\end{remark}

\begin{remark}\label{rem:crit1}
	Actually, it follows from Proposition \ref{lem:1} and Lemma \ref{lem:level} that any nonzero critical point $u$ of $E_\omega$ satisfies $E_\omega(u) \in \left[\beta, \frac{\kappa |\Omega|}{2}\right)$, where $\beta$ is the ground state level given by \eqref{eq:critlevel}.
\end{remark}

\section{Multiplicity of solutions}\label{sec:bound_state}

The multiplicity result of Theorem \ref{thm:2} follows from the following proposition.
\begin{prop}\label{prop:second_sol}
	Assume that $\lambda_{k} < \kappa - \omega^2 < \lambda_{k+1}$ for some $k \in \mathbb{N}$. Then $E_\omega$ has at least $k$ nonzero saddle-type critical points with positive energy.
\end{prop}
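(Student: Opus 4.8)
The plan is to apply the linking / symmetric mountain-pass machinery of Bartolo--Benci--Fortunato \cite{BBF}, using the eigenfunctions $\varphi_1,\dots,\varphi_k$ associated to the eigenvalues $\lambda_1,\dots,\lambda_k$ that lie strictly below $\kappa-\omega^2$. The key structural observation is that on the finite-dimensional subspace $V_k := \mathrm{span}\{\varphi_1,\dots,\varphi_k\}$ the quadratic part of $E_\omega$ is negative definite: for $v\in V_k$ one has $\int_\Omega|\Delta v|^2\,dx \le \lambda_k \int_\Omega|\nabla v|^2\,dx < (\kappa-\omega^2)\int_\Omega|\nabla v|^2\,dx$, so the leading term $\tfrac12(\int|\Delta v|^2 - (\kappa-\omega^2)\int|\nabla v|^2)$ is $\le 0$ and vanishes only at $v=0$; combined with the bounded perturbation $\tfrac{\kappa}{2}\int(1-e^{-|\nabla v|^2})\,dx\in[0,\tfrac{\kappa|\Omega|}{2})$, this gives $E_\omega(v)\to-\infty$ as $\|\Delta v\|_{L^2}\to\infty$ within $V_k$, uniformly, and in fact $E_\omega(v)<\tfrac{\kappa|\Omega|}{2}$ on all of $V_k$. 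Meanwhile, on the orthogonal complement $W_k := \overline{\mathrm{span}\{\varphi_{k+1},\varphi_{k+2},\dots\}}$ one has $\int|\Delta w|^2\ge\lambda_{k+1}\int|\nabla w|^2$, and since $\lambda_{k+1}>\kappa-\omega^2$ the quadratic form is coercive there, yielding $E_\omega(w)\ge \alpha>0$ for $\|\Delta w\|_{L^2}=\rho$ with $\rho$ small — this is a refinement of condition $(A_1)$ already proved in Proposition~\ref{lem:1}, now on the subspace $W_k$.

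The steps, in order. (1) Fix the $\mathbb{Z}_2$-symmetry: $E_\omega$ is even since it depends on $u$ only through $|\Delta u|^2$, $|\nabla u|^2$; and $E_\omega(0)=0$. (2) Verify the Palais--Smale condition — this is exactly Lemma~\ref{lem:PS}, valid since $\kappa-\omega^2\notin\{\lambda_j\}$ by hypothesis. (3) Establish the linking geometry needed for the $\mathbb{Z}_2$ version of the linking theorem: for the family of sets $\{Q_R\} := \{B_R\cap(V_k\oplus\mathbb{R}\varphi_{k+1})\}$ or, more cleanly, for the genus-based minimax over symmetric sets, exhibit a $k$-dimensional sphere in $V_k$ on which $E_\omega$ is bounded above by a constant independent of the radius (using the boundedness of the exponential term), and a complementary sphere $\partial B_\rho\cap W_k$ on which $E_\omega\ge\alpha>0$; the separation $\sup_{V_k}E_\omega$ could be as large as $\tfrac{\kappa|\Omega|}{2}$ but that is harmless because $\alpha$ can be taken arbitrarily — what matters is $\alpha>0=E_\omega(0)$ and that $E_\omega|_{V_k}$ has no positive lower bound at infinity. (4) Define the minimax values
$$
c_j := \inf_{A\in\Sigma_j}\ \sup_{u\in A} E_\omega(u), \qquad j=1,\dots,k,
$$
where $\Sigma_j$ is the class of symmetric subsets of $X$ of genus $\ge j$ that are suitably ``linked'' with $\partial B_\rho\cap W_k$ (following the setup of \cite{BBF}). (5) Invoke \cite[Theorem~2.4]{BBF} (or the analogous statement therein) to conclude that $0<\alpha\le c_1\le c_2\le\dots\le c_k<+\infty$ are critical values of $E_\omega$, and that if some $c_j = c_{j+1}$ then the critical set at that level has positive genus, hence is infinite; either way this produces at least $k$ distinct nonzero critical points. (6) Each such critical point has positive energy by Lemma~\ref{lem:level}, and is of saddle type because the quadratic form $\langle E_\omega''(0)\,\cdot,\cdot\rangle$ is indefinite (negative on the nontrivial subspace $V_k$, positive on $W_k$) and the nonlinear term is a lower-order perturbation — more directly, each $c_j\ge\alpha$ lies above $E_\omega(0)=0$ while $E_\omega$ takes negative values (Lemma~\ref{lem:ExNeh}), so none of these are local minima, and the minimax characterization forbids them from being local maxima.

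The main obstacle is the verification of the linking geometry in the precise form demanded by the abstract theorem of \cite{BBF}: one must check that every symmetric set of the appropriate genus genuinely intersects the ``barrier'' $\partial B_\rho\cap W_k$, which requires an intersection/genus argument of Borsuk--Ulam type, and one must confirm that the sup of $E_\omega$ over competitors in $\Sigma_j$ stays finite — the latter is where the boundedness $0\le 1-e^{-|\nabla u|^2}\le 1$ is essential, since it prevents the energy from blowing up on the finite-dimensional pieces even though the quadratic part is unbounded below. A secondary technical point is that, unlike the classical case of a subcritical power nonlinearity, here one cannot deform to the Nehari manifold, so the argument must stay at the level of the free functional $E_\omega$ on $X$; fortunately the Palais--Smale condition from Lemma~\ref{lem:PS} is exactly what is needed to run the deformation lemma underlying \cite{BBF} without any such reduction.
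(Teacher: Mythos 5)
Your overall framework (the Bartolo--Benci--Fortunato theorem, evenness, $E_\omega(0)=0$, the Palais--Smale condition from Lemma~\ref{lem:PS}, and the subspace $\mathrm{span}\{\varphi_1,\dots,\varphi_k\}$ on which $E_\omega$ is bounded above because the quadratic part is negative definite there and the exponential term is bounded) matches the paper. But your choice of the second subspace breaks the count. In \cite[Theorem 2.4]{BBF} the number of critical pairs produced is $\dim W-\operatorname{codim}V$, where $W$ is the finite-dimensional subspace on which $E_\omega$ is bounded above and $V$ is the subspace on which $E_\omega\ge\alpha$ on the sphere $\|\Delta u\|_{L^2}=\rho$. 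You take $V=W_k=\overline{\mathrm{span}\{\varphi_{k+1},\varphi_{k+2},\dots\}}$, which has codimension $k$, so with $\dim W=k$ the theorem yields $k-k=0$ critical points: your application is vacuous. The paper instead takes $V=X$ (codimension $0$), which is what gives the count $k$.

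The reason $V=X$ is admissible — and the point your proposal misses — is that the sphere condition $(A_1)$ holds in \emph{all} directions, not only on the orthogonal complement of the first $k$ eigenfunctions. Your step (6) asserts that $\langle E_\omega''(0)\,\cdot,\cdot\rangle$ is indefinite; this is false. Differentiating twice at $0$, the exponential term contributes $+\kappa\int_\Omega|\nabla u|^2\,dx$, which cancels the $-\kappa\int_\Omega|\nabla u|^2\,dx$ from the quadratic part, leaving
\[
\langle E_\omega''(0)u,u\rangle=\int_\Omega|\Delta u|^2\,dx+\omega^2\int_\Omega|\nabla u|^2\,dx\ \ge\ \int_\Omega|\Delta u|^2\,dx>0 \quad\text{for }u\neq 0,
\]
so $0$ is a strict local minimum and the geometry is mountain-pass in every direction (this is exactly $(A_1)$ of Proposition~\ref{lem:1}, proved there for all $u\in X$ with $\|\Delta u\|_{L^2}=\rho$; the proof is not a soft coercivity estimate on a subspace but a compactness argument exploiting that $e^{-t^2|\nabla v|^2}\to 1$ as $t\to 0$). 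Replacing your $V=W_k$ by $V=X$ repairs the argument and reduces it to the paper's proof; the rest of your outline (boundedness of $E_\omega$ on $\mathrm{span}\{\varphi_1,\dots,\varphi_k\}$, positivity of the critical levels via Lemma~\ref{lem:level}, saddle type from the minimax characterization) is correct.
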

\begin{proof}
	To prove the result we will apply the abstract critical point theorem \cite[Theorem 2.4]{BBF} which is an improvement of 
	\cite[Theorem 2.23]{AR}. 
	Recall that $E_\omega$ is even, $E_\omega(0)=0$, and $E_\omega$ satisfies the Palais--Smale condition in $(0, +\infty)$ by Lemma \ref{lem:PS}, see \cite[p.\ 984]{BBF}.
	Therefore, to apply \cite[Theorem 2.4]{BBF} it suffices to show the existence of	 two closed subspaces $W$ and $V$ of $X$ with $\text{codim}\, V < +\infty$, and three constants $\rho > 0$, $\gamma > \alpha > 0$ such that 
	\begin{enumerate}
		\item[(i)] $E_\omega(u) \geq \alpha$ for all $u \in V$ with $\|\Delta u\|_{L^2} = \rho$;
		\item[(ii)] $E_\omega(u) < \gamma$ for all $u \in W$.
	\end{enumerate}
	Let $V := X$. Evidently, $\text{codim}\, V = 0$.
	Moreover, (i) is identical to $(A_1)$ of Proposition \ref{lem:1} and hence (i) is satisfied.
		
	In order to specify $W$, consider the basis $\{\varphi_1, \varphi_2, \dots\}$ for $X$ of orthonormal eigenfunctions of the problem \eqref{eq:eigenvalue}, see Section \ref{sec:preliminaries}. 
	Let us set $W := \text{span}\{\varphi_1, \dots, \varphi_k\}$, where $k \in \mathbb{N}$ is given by the assumption, and verify (ii).	
	Note that $\text{dim}\,W = k$, any $u \in W$ has the form $u = \sum_{i=1}^k \alpha_i \varphi_i$, and in view of the orthogonality of eigenfunctions
	\begin{align*}
	\int_\Omega |\Delta u|^2 \, dx = 
	\sum_{i=1}^k \alpha_i^2 \int_\Omega |\Delta \varphi_i|^2 \,dx 
	=
	\sum_{i=1}^k \alpha_i^2 \lambda_i \int_\Omega |\nabla \varphi_i|^2 \,dx 
	\leq \lambda_k \int_\Omega |\nabla u|^2 \, dx.
	\end{align*}
	Therefore, since $\lambda_k < \kappa-\omega^2$, there exists $t_0>0$ such that for any $u \in W$ with $\|\Delta u\|_{L^2} \geq t_0$ we have
	\begin{align*}
	\notag
	E_\omega(u) 
	&\leq \frac{\|\Delta u\|_{L^2}^2}{2}\left(1-\frac{\kappa-\omega^2}{\lambda_k}\right) - \frac{\kappa}{2} \int_\Omega e^{-|\nabla u|^2} \, dx + \frac{\kappa |\Omega|}{2} \\
	\label{eq:I<0}
	&\leq 
	\frac{t_0^2}{2}\left(1-\frac{\kappa-\omega^2}{\lambda_k}\right) 
	+ 
	\frac{\kappa |\Omega|}{2}
	\leq
	0.
	\end{align*}
	At the same time, it is evident that there exists sufficiently large $\gamma > 0$ such that $E_\omega(u) < \gamma$ for any $u \in W$ with $\|\Delta u\|_{L^2} \leq t_0$. The combination of the last two facts gives (ii).
	
	Thus, we apply \cite[Theorem 2.4]{BBF} to deduce that $E_\omega$ possesses at least $k$ distinct critical points of saddle type whose critical levels belong to $[\alpha,\gamma]$.
\end{proof}

\section{Regularity}\label{sec:regularity}
 
In this section, we will show $C^{4,\gamma}(\overline{\Omega})$-regularity of solutions obtained in Propositions \ref{lem:1} and \ref{prop:second_sol}. 
Recall that we assume $\partial \Omega \in C^{4,\eta}$.
In the case $X = W_0^{2,2}(\Omega)$, this regularity follows from \cite[Theorem 1]{luckhaus}. Therefore, we will prove this result for $X = W^{2,2}(\Omega) \cap W_0^{1,2}(\Omega)$.

\begin{prop}
	Let $X = W^{2,2}(\Omega) \cap W_0^{1,2}(\Omega)$.
	Then any solution of \eqref{eq:pro1} belongs to $C^{4,\gamma}(\overline{\Omega})$ for some $\gamma \in (0,1)$.
\end{prop}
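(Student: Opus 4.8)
Since \cite[Theorem 1]{luckhaus} already settles the Dirichlet case, we describe the argument for $X = W^{2,2}(\Omega)\cap W_0^{1,2}(\Omega)$, the idea being that the Navier boundary condition $u=\Delta u=0$ lets us split the fourth-order equation into two second-order Dirichlet problems, after which a standard elliptic bootstrap applies.

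First I would put $g := \bigl[(\kappa-\omega^2)-\kappa\, e^{-|\nabla u|^2}\bigr]\nabla u$ and rewrite the weak formulation \eqref{eq:sol} as $\int_\Omega \Delta u\,\Delta\varphi\,dx = \int_\Omega (g,\nabla\varphi)\,dx$ for all $\varphi\in X$. As the scalar factor in $g$ is bounded, $|g|\le C\,|\nabla u|$, so $g\in L^{p_0}(\Omega;\mathbb{R}^N)$ for any $p_0<2^*$ if $N\ge3$ and any $p_0<\infty$ if $N\le2$, by the embeddings recalled in Section~\ref{sec:preliminaries}. Let $\tilde v\in W_0^{1,2}(\Omega)$ solve $\int_\Omega(\nabla\tilde v,\nabla\psi)\,dx=\int_\Omega(g,\nabla\psi)\,dx$ for all $\psi\in W_0^{1,2}(\Omega)$ (Lax--Milgram), and let $\tilde u\in W^{2,2}(\Omega)\cap W_0^{1,2}(\Omega)$ solve $-\Delta\tilde u=\tilde v$. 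Writing $\Delta\tilde u=-\tilde v$ and integrating $\int_\Omega\tilde v\,\Delta\varphi\,dx$ by parts (the boundary term vanishes since $\tilde v|_{\partial\Omega}=0$), one sees that $\tilde u$ satisfies the same identity $\int_\Omega\Delta\tilde u\,\Delta\varphi\,dx=\int_\Omega(g,\nabla\varphi)\,dx$ as $u$; since the bilinear form $(z,\varphi)\mapsto\int_\Omega\Delta z\,\Delta\varphi\,dx$ is coercive on $X$ (recall that $\|\Delta\cdot\|_{L^2}$ is an equivalent norm on $X$), this linear problem has a unique solution, whence $\tilde u=u$ and $v:=-\Delta u=\tilde v\in W_0^{1,2}(\Omega)$. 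Therefore the pair $(u,v)$ solves
\begin{equation*}
-\Delta u=v, \qquad -\Delta v=-\text{div}(g), \qquad u,v\in W_0^{1,2}(\Omega),
\end{equation*}
and standard second-order elliptic theory is available.

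Now I would bootstrap the integrability of $\nabla u$. The $W^{1,p}$-estimate for the Dirichlet Laplacian with divergence-form data gives $v\in W^{1,p_0}(\Omega)$ from $g\in L^{p_0}$, and elliptic regularity for $-\Delta u=v$ (using $\partial\Omega\in C^{4,\eta}$) gives $u\in W^{3,p_0}(\Omega)$, hence $\nabla u\in W^{2,p_0}(\Omega;\mathbb{R}^N)$; by the Sobolev embedding, $\nabla u\in L^{p_1}(\Omega;\mathbb{R}^N)$ with $\frac{1}{p_1}=\frac{1}{p_0}-\frac{2}{N}$ (as long as $2p_0<N$), so $g\in L^{p_1}$ and we may repeat. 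The improvement in each round being the fixed amount $2/N$ in the $1/p$-scale, after finitely many rounds $\nabla u$ lies in $L^q(\Omega)$ for every $q<\infty$; one more round then gives $u\in W^{3,q}(\Omega)$ for every $q<\infty$, and choosing $q>N$ yields $\nabla u\in W^{2,q}(\Omega)\hookrightarrow C^{1,\gamma}(\overline{\Omega})$, i.e.\ $u\in C^{2,\gamma}(\overline{\Omega})$ for some $\gamma\in(0,1)$.

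Finally, since the map $\xi\mapsto e^{-|\xi|^2}\xi$ is $C^\infty$ on $\mathbb{R}^N$ with bounded derivatives of every order, $g$ inherits $C^{1,\gamma}(\overline{\Omega};\mathbb{R}^N)$-regularity from $\nabla u\in C^{1,\gamma}(\overline{\Omega};\mathbb{R}^N)$, so $\text{div}(g)\in C^{0,\gamma}(\overline{\Omega})$. Global Schauder estimates for $-\Delta v=-\text{div}(g)$ give $v\in C^{2,\gamma}(\overline{\Omega})$, and then for $-\Delta u=v$ (shrinking $\gamma$ so that $\gamma\le\eta$, if necessary) give $u\in C^{4,\gamma}(\overline{\Omega})$, which is the assertion. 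I expect the main difficulty to lie in the first, structural step — the reduction to the second-order system, in particular checking that $v=-\Delta u$ genuinely belongs to $W_0^{1,2}(\Omega)$ and solves the auxiliary Dirichlet problem — whereas the $L^p$/Schauder iteration afterwards is essentially bookkeeping, the only mildly delicate point there being the behaviour of the Sobolev exponents near the borderline cases.
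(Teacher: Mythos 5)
Your argument is correct, but it follows a genuinely different route from the paper. The paper keeps the fourth-order structure: it sets $f:=\kappa\,\mathrm{div}(e^{-|\nabla u|^2}\nabla u)-(\kappa-\omega^2)\Delta u$, uses the pointwise bound $|\mathrm{div}(e^{-|\nabla u|^2}\nabla u)|\le 2\sum_{|\alpha|=2}|D^\alpha u|$ to place $f$ in $L^2(\Omega)$, and then bootstraps through the $W^{4,s}$ a priori theory for $\Delta^2$ with Navier conditions (\cite[Theorem 2.20]{gazzola2010}), iterating $W^{4,s_i}\hookrightarrow W^{2,s_{i+1}}$ until $f$ becomes H\"older continuous and \cite[Theorem 2.19]{gazzola2010} applies. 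You instead exploit the Navier splitting into the second-order system $-\Delta u=v$, $-\Delta v=-\mathrm{div}(g)$ with $g=[(\kappa-\omega^2)-\kappa e^{-|\nabla u|^2}]\nabla u$, and bootstrap only the integrability of $\nabla u$ via second-order Calder\'on--Zygmund and Schauder theory. What your route buys: the nonlinearity is left in divergence form and is controlled by $|\nabla u|$ alone, so the iteration starts from $L^{p_0}$ with $p_0$ up to $2^*$ rather than from $L^2$ of second derivatives, and only classical second-order elliptic theory is invoked. What it costs: the structural step identifying $-\Delta u$ with the auxiliary $\tilde v\in W_0^{1,2}(\Omega)$ (which you handle correctly, via uniqueness for the coercive form $\int_\Omega\Delta z\,\Delta\varphi\,dx$ on $X$ --- this is the same uniqueness trick the paper uses to identify its strong solution with $u$), and an appeal to the $W^{1,p}$ theory for the Dirichlet Laplacian with divergence-form data, which is standard for $C^{4,\eta}$ domains but deserves an explicit reference (e.g.\ Simader's $L^p$ theory), as does the higher-order regularity $u\in W^{3,p}$ from $-\Delta u=v\in W^{1,p}$. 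Your closing Schauder step correctly shrinks $\gamma\le\eta$ so that $\partial\Omega\in C^{4,\gamma}$, and your remark about dodging the borderline Sobolev exponents is handled in the paper by the same device (the free choice of the exponent at the critical step). Both arguments are complete; the paper's is more uniform across the two boundary conditions, while yours is more elementary in the tools it needs for the Navier case.
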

\begin{proof}
	Let $u \in X$ be a weak solution of \eqref{eq:pro1}. 
	We rewrite \eqref{eq:pro1} as
	\begin{equation}\label{eq:P1}
	\Delta^2 u = f(x),
	\quad x \in \Omega,
	\end{equation}
	where
	\begin{equation}\label{eq:f}
	f(x) := \kappa \,\text{div} \left(e^{- |\nabla u(x)|^2} \nabla u(x) \right) - (\kappa-\omega^2) \Delta u(x).
	\end{equation}
	Note that $f \in L^2(\Omega)$. 
	Indeed, we have $\Delta u \in L^2(\Omega)$, and for the first term in \eqref{eq:f} we can write 
	\begin{align*}
	\text{div}\left(e^{-|\nabla u|^2} \nabla u\right) 
	= 
	e^{-|\nabla u|^2} 
	\left(
	\Delta u - 2 \sum_{i,j=1}^{N} \frac{\partial u}{\partial x_i}
	\frac{\partial u}{\partial x_j}
	\frac{\partial^2 u}{\partial x_i \partial x_j} 
	\right).
	\end{align*}
	Therefore, using the estimates $e^{-s^2}(1+2s^2) \leq 2$ and $|\partial u/\partial x_i| \leq |\nabla u|$ for $i=1,\dots,N$, we get
	\begin{equation}\label{eq:estimate_for_gaussian_diffusion}
	\left|\text{div}\left(e^{-|\nabla u|^2} \nabla u\right)\right| 
	\leq 
	e^{-|\nabla u|^2} (1 + 2 |\nabla u|^2) \sum_{|\alpha|=2} |D^\alpha u| \leq 2 \sum_{|\alpha|=2} |D^\alpha u|,
	\end{equation}
	which shows that $\text{div}\left(e^{-|\nabla u|^2} \nabla u\right) \in L^2(\Omega)$.
	
	\smallskip	
	Now we perform the following bootstrap argument. 
	
	\textit{Step 1}. Let $f \in L^{s_i}(\Omega)$ for some $s_i \geq 2$, $i=0,1,2,\dots$, and $s_0=2$. Then we can apply \cite[Theorem 2.20, p.\ 46]{gazzola2010} to \eqref{eq:P1} by taking $p=s_i$, $k=4$, $m=2$, $m_1 = 0$, $m_2 = 2$. This theorem implies the existence of a unique strong solution $w \in W^{4,s_i}(\Omega)$ for \eqref{eq:P1} subject to Navier boundary conditions. 
	Since the homogeneous problem 
	$$
	\Delta^2 v = 0,
	\quad x \in \Omega,
	$$
	with Navier boundary conditions admits only a trivial weak solution ($v=0$), we deduce that $w$ is a unique weak solution of \eqref{eq:P1}, that is, $w \equiv u$ a.e.\ in $\Omega$ and hence $u \in W^{4,s_i}(\Omega)$.
	
	\textit{Step 2}. If $N < 2s_i$, then go to Step 3. 
	If $N > 2s_i$, then we have $W^{4,s_i}(\Omega) \hookrightarrow W^{2,s_{i+1}}(\Omega)$, where $s_{i+1} := \frac{Ns_i}{N - 2s_i}$. 
	This implies that $f \in L^{s_{i+1}}(\Omega)$ as it follows from \eqref{eq:estimate_for_gaussian_diffusion}.
	Set $i := i+1$ and go to Step 1.
	
	Notice that the case $N = 2s_i$ can be handled analogously by taking an arbitrary $s_{i+1} \in (s_i,+\infty)$. We omit the details. 
	
	\textit{Step 3}. 
	Recall that $W^{4,s_i}(\Omega) \hookrightarrow C^{2,\tau}(\overline{\Omega})$ for some $\tau \in (0,1)$ whenever $N < 2s_i$. 
	Therefore, we see that $u \in C^{2,\tau}(\overline{\Omega})$. 
	Consequently, $f \in C^{0,\mu}(\overline{\Omega})$ for some $\mu \in (0,1)$, and hence \cite[Theorem 2.19, p.\ 45]{gazzola2010} implies that $u \in C^{4,\gamma}(\overline{\Omega})$ for some $\gamma \in (0,1)$.
	
	Let us show that Step 2 will lead to Step 3 after a finite number of iterations.
	Since $s_0=2$, we have $s_1 = \frac{2N}{N-4}$, and we can find $k \in \mathbb{N}$ such that $s_1  > \frac{N}{2k}$. Then, it is not hard to see that
	$$
	s_2 = \frac{N s_1}{N-2s_1} > \frac{N}{2(k-1)}, 
	\quad
	\dots,
	\quad  
	s_{i+1} = \frac{N s_i}{N-2s_i} > \frac{N}{2(k-i)},
	\quad
	\dots,
	\quad
	s_{k} > \frac{N}{2},
	$$
	and hence Step 2 leads to Step 3 after $k$ iterations.
\end{proof}

\section{Comparison with approximative problems}\label{sec:comparison}

In this section, we illustrate that existence results for the approximative problems \eqref{eq:prototype2} and \eqref{eq:prototype1} are substantially different from the existence results for the original problem \eqref{eq:pro1}.

\subsection{First approximative problem}

We consider the equation \eqref{eq:prototype2}:
\begin{equation*} 
\Delta^2 u -\omega^2 \Delta u + \kappa \, \text{div} \left(|\nabla u|^2 \nabla u\right) 
= 0
\end{equation*}
subject to Dirichlet or Navier boundary conditions \eqref{eq:Dirichlet} or \eqref{eq:Navier}, respectively.
The energy functional associated with \eqref{eq:prototype2} is defined by
$$
\mathcal{E}_1(u) 
= 
\frac{1}{2} \int_\Omega |\Delta u|^2 \, dx
+\frac{\omega^2}{2} \int_\Omega |\nabla u|^2 \, dx
- \frac{\kappa}{4} \int_\Omega |\nabla u|^4 \, dx.
$$
Even if the last integral is well-defined in the space $X$ for any $N \leq 4$, we treat the case $N \leq 3$ only.
\begin{prop}\label{prop:approx_first}
	Let $N \leq 3$. Then for any $\omega \in \mathbb{R}$ and $\kappa>0$, \eqref{eq:prototype2} has a ground state solution $u$ satisfying $\mathcal{E}_1(u) > 0$, which is a critical point of $\mathcal{E}_1$ of saddle type.
\end{prop}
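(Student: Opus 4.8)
The plan is to use the Nehari manifold minimization method, which is available here (unlike for the original problem~\eqref{eq:pro1}) because the nonlinearity $\frac{\kappa}{4}\int_\Omega |\nabla u|^4\,dx$ is homogeneous of degree $4$. First I would analyze the fibering map $t \mapsto \mathcal{E}_1(tu)$ for $u \in X\setminus\{0\}$: writing $A(u) = \int_\Omega |\Delta u|^2\,dx + \omega^2 \int_\Omega |\nabla u|^2\,dx$ and $B(u) = \int_\Omega |\nabla u|^4\,dx$, we have $\mathcal{E}_1(tu) = \frac{t^2}{2}A(u) - \frac{\kappa t^4}{4}B(u)$, so that $\frac{\partial}{\partial t}\mathcal{E}_1(tu) = t A(u) - \kappa t^3 B(u)$. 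Since $\omega^2 \geq 0$ and $\lambda_1 > 0$, the quantity $A(u)$ is strictly positive and in fact equivalent to $\|\Delta u\|_{L^2}^2$; and $B(u) > 0$ for $u \neq 0$. Hence there is a \emph{unique} $t_u = \sqrt{A(u)/(\kappa B(u))} > 0$ with $t_u u \in \mathcal{N}_1$ (the Nehari manifold for $\mathcal{E}_1$), this being the global maximum of the fibering map, with $\mathcal{E}_1(t_u u) = \frac{A(u)^2}{4\kappa B(u)} > 0$. So $\mathcal{N}_1 \neq \emptyset$, every nonzero critical point lies on $\mathcal{N}_1$, and $\inf_{\mathcal{N}_1}\mathcal{E}_1 > 0$ (in fact bounded below by a positive constant, since on $\mathcal{N}_1$ one has $\mathcal{E}_1(u) = \frac{1}{4}A(u) \geq \frac{1}{4}\frac{\lambda_1}{1+\lambda_1}\,\|\Delta u\|_{L^2}^2$ and a Sobolev/Gagliardo--Nirenberg bound $B(u) \leq C\|\Delta u\|_{L^2}^4$ forces $\|\Delta u\|_{L^2}$ to be bounded away from $0$ on $\mathcal{N}_1$).

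Next I would show the infimum $m := \inf_{\mathcal{N}_1}\mathcal{E}_1$ is attained. Take a minimizing sequence $\{u_n\}\subset \mathcal{N}_1$. From $\mathcal{E}_1(u_n) = \frac14 A(u_n) \to m$ we get boundedness of $\{u_n\}$ in $X$, so up to a subsequence $u_n \rightharpoonup u$ weakly in $X$ and, by the compact embedding $W^{2,2}(\Omega)\hookrightarrow\hookrightarrow W^{1,q}(\Omega)$ valid for $q=4$ when $N\leq 3$, strongly in $W^{1,4}(\Omega)$ (this is exactly where $N\leq 3$ is used). Then $B(u_n)\to B(u)$ and $\int_\Omega|\nabla u_n|^2 \to \int_\Omega|\nabla u|^2$, while $\int_\Omega |\Delta u_n|^2 \geq \int_\Omega|\Delta u|^2 + o(1)$ by weak lower semicontinuity. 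Since $B(u_n) = A(u_n)/\kappa$ stays bounded away from $0$, we get $B(u)>0$, so $u\neq 0$. Now $A(u) \leq \liminf A(u_n) = \kappa\,\lim B(u_n) = \kappa B(u)$, hence $t_u \leq 1$, and $\mathcal{E}_1(t_u u) = \frac14 A(u) \frac{A(u)}{\kappa B(u)} \cdot \kappa \leq \frac14 A(u) \le \liminf \frac14 A(u_n) = m$; combined with $\mathcal{E}_1(t_u u)\geq m$ this forces equality, which in turn forces $A(u) = \kappa B(u)$ (so $t_u = 1$, $u\in\mathcal{N}_1$) and $\|\Delta u_n\|_{L^2}\to\|\Delta u\|_{L^2}$, giving strong convergence $u_n\to u$ in $X$ and $\mathcal{E}_1(u) = m$.

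Then a standard Lagrange-multiplier/fibering argument shows the minimizer $u$ is a free critical point of $\mathcal{E}_1$: constraining to $\mathcal{N}_1$ yields $\mathcal{E}_1'(u) = \mu\, G'(u)$ where $G(u) = \langle \mathcal{E}_1'(u),u\rangle = A(u) - \kappa B(u)$, and testing with $u$ gives $0 = \langle\mathcal{E}_1'(u),u\rangle = \mu\langle G'(u),u\rangle$; since $\langle G'(u),u\rangle = 2A(u) - 4\kappa B(u) = 2A(u) - 4A(u) = -2A(u) \neq 0$, we get $\mu = 0$, so $u$ is a weak solution of~\eqref{eq:prototype2}, hence a ground state with $\mathcal{E}_1(u) = m > 0$. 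Finally, to see $u$ is of saddle type: $\mathcal{E}_1$ is unbounded below along $t\mapsto t u$ (as $t\to\infty$ the $t^4$ term dominates) and unbounded below on $W=\mathrm{span}\{\varphi_1,\dots,\varphi_k\}$ when $\omega^2 < \lambda_k$ by the same argument as in Proposition~\ref{prop:second_sol}, while it is bounded below and coercive on suitable finite-codimensional complements near $0$ --- more concretely $u$ is a mountain-pass point for $\mathcal{E}_1$ (the geometry $(A_1)$-type estimate near $0$ comes from $B(u)\leq C\|\Delta u\|^4$, and $(A_2)$ from the unboundedness below), so its Morse index is at least $1$ and it cannot be a local minimum of $\mathcal{E}_1$ on all of $X$. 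I expect the only genuinely delicate point to be the passage $B(u_n)\to B(u)$ together with the lower-semicontinuity bookkeeping that pins down $t_u=1$ and upgrades weak to strong convergence; everything else is routine once the compact embedding for $N\leq 3$ is in hand.
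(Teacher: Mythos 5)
Your proposal is correct and follows essentially the same route as the paper: fibering analysis of $t\mapsto\mathcal{E}_1(tu)$ yielding a unique global-maximum point $t_u$ on each ray, minimization of $\mathcal{E}_1$ over the Nehari manifold using the compact embedding $W^{2,2}\hookrightarrow\hookrightarrow W^{1,4}$ for $N\leq 3$, and a Lagrange-multiplier argument exploiting $\langle Q'(u),u\rangle=-2\kappa\int_\Omega|\nabla u|^4\,dx<0$ to remove the constraint. Your use of the identity $\mathcal{E}_1=\tfrac14 A$ on the Nehari manifold to get boundedness of the minimizing sequence and the inequality chain pinning down $t_u=1$ are mild streamlinings of the paper's two contradiction arguments, but the substance is identical.
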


To prove Proposition \ref{prop:approx_first}, consider the functional
$$
Q(u) := \left<\mathcal{E}_1'(u),u\right> =
\left.\frac{\partial}{\partial t} \mathcal{E}_1(t u)\right|_{t=1} = 
\int_\Omega |\Delta u|^2 \, dx 
+
\omega^2 \int_\Omega |\nabla u|^2 \, dx
- \kappa\int_\Omega |\nabla u|^4 \, dx,
$$
and the Nehari manifold corresponding to \eqref{eq:prototype2}:
$$
\mathcal{N} := 
\left\{ 
u \in X \setminus \{0\}:~ 
Q(u)=0
\right\}.
$$

First we need the following lemma.
\begin{lemma}\label{lem:Neh}
	For any $u \in X \setminus \{0\}$ there exists a unique $t_u > 0$ such that $t_u u \in \mathcal{N}$. Moreover, $t_u$ is a point of global maximum of $\mathcal{E}_1(tu)$ with respect to $t>0$, and $\mathcal{E}_1(t_u u)>0$.
	Furthermore, if $N \leq 3$, then for any $c>0$ there exists $u_c \in \mathcal{N}$ such that $\mathcal{E}_1(u_c) > c$.
\end{lemma}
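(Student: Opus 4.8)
The plan is to reduce everything to the elementary one-variable analysis of the fibering map $t\mapsto\mathcal{E}_1(tu)$, and then to obtain the unboundedness statement by a concentration/scaling argument, which is the only place where the dimensional restriction $N\le 3$ is genuinely used.

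First, for fixed $u\in X\setminus\{0\}$ I would write
$$
\mathcal{E}_1(tu)=\frac{t^2}{2}\,a(u)-\frac{t^4}{4}\,b(u),\qquad
a(u):=\int_\Omega|\Delta u|^2\,dx+\omega^2\int_\Omega|\nabla u|^2\,dx,\quad
b(u):=\kappa\int_\Omega|\nabla u|^4\,dx.
$$
Since $\|\Delta\cdot\|_{L^2}$ is an equivalent norm on $X$, any $u\ne 0$ has $\Delta u\not\equiv0$, hence $a(u)>0$; and a nonzero element of $X$ cannot have an a.e.\ vanishing gradient (it would be constant, hence $0$ by the boundary conditions), so $b(u)>0$. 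Then
$$
\frac{\partial}{\partial t}\mathcal{E}_1(tu)=t\bigl(a(u)-b(u)\,t^2\bigr)
$$
vanishes on $(0,+\infty)$ exactly at $t_u:=\sqrt{a(u)/b(u)}$, is positive on $(0,t_u)$ and negative on $(t_u,+\infty)$. This yields at once the existence and uniqueness of $t_u$ with $t_u u\in\mathcal{N}$, the fact that $t_u$ is the global maximum point of $t\mapsto\mathcal{E}_1(tu)$ on $(0,+\infty)$, and, substituting back, $\mathcal{E}_1(t_u u)=a(u)^2/(4b(u))>0$. In particular, for $u\in\mathcal{N}$ one has $t_u=1$, i.e.\ $a(u)=b(u)$ and $\mathcal{E}_1(u)=a(u)/4>0$.

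For the final claim it suffices, by the formula just obtained, to produce for each $c>0$ some $v\in X\setminus\{0\}$ with $a(v)^2/b(v)>4c$, and then to take $u_c:=t_v v\in\mathcal{N}$. I would fix a nonzero $\phi\in C_0^\infty(B_1(0))$ and a ball $B_r(x_0)\subset\Omega$, and set $v_\lambda(x):=\phi\bigl((x-x_0)/(\lambda r)\bigr)\in C_0^\infty(\Omega)\subset X$ for $\lambda\in(0,1]$. A change of variables gives the scalings
$$
\int_\Omega|\nabla v_\lambda|^2\,dx=(\lambda r)^{N-2}\!\int|\nabla\phi|^2,\quad
\int_\Omega|\Delta v_\lambda|^2\,dx=(\lambda r)^{N-4}\!\int|\Delta\phi|^2,\quad
\int_\Omega|\nabla v_\lambda|^4\,dx=(\lambda r)^{N-4}\!\int|\nabla\phi|^4,
$$
whence
$$
\frac{a(v_\lambda)^2}{b(v_\lambda)}=\frac{(\lambda r)^{N-4}}{\kappa}\cdot
\frac{\bigl(\int|\Delta\phi|^2+\omega^2(\lambda r)^2\int|\nabla\phi|^2\bigr)^2}{\int|\nabla\phi|^4}.
$$
Since $N\le 3$ makes the exponent $N-4$ negative, the right-hand side tends to $+\infty$ as $\lambda\to0^+$, and choosing $\lambda$ small enough completes the argument.

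The main obstacle is essentially nonexistent: the proof is a chain of routine computations. The one conceptual point worth isolating is that the dimension enters only through the sign of the scaling exponent $N-4$ (equivalently, through the compact embedding $X\hookrightarrow W^{1,4}(\Omega)$ needed for $b$ to be finite and continuous), which is precisely why the unboundedness of $\mathcal{E}_1$ on $\mathcal{N}$ — and hence the viability of the Nehari approach for \eqref{eq:prototype2} — is stated for $N\le 3$ only.
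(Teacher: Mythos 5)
Your proof is correct and follows essentially the same route as the paper: explicit analysis of the quadratic-minus-quartic fibering map giving the unique critical point $t_u=\sqrt{a(u)/b(u)}$ and the value $a(u)^2/(4b(u))$, followed by the same concentration/scaling argument (a bump rescaled into a small ball, with the exponent $N-4<0$ driving the energy to $+\infty$) for the unboundedness of $\mathcal{E}_1$ on $\mathcal{N}$. The only additions are your explicit (and correct) verifications that $a(u)>0$ and $b(u)>0$ for $u\neq 0$, which the paper leaves implicit.
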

\begin{proof}
	Let us fix any $u \in X \setminus \{0\}$ and consider the fibering functional
	\begin{equation}\label{eq:E1fib}
	\mathcal{E}_1(tu) 
	= \frac{t^2}{2} \left(\int_\Omega |\Delta u|^2 \, dx 
	+ \omega^2\int_\Omega |\nabla u|^2 \, dx \right) 
	- \frac{\kappa t^4}{4} \int_\Omega |\nabla u|^4 \, dx.
	\end{equation}
	It is not hard to see that there exists a unique $t_u > 0$ given by
	\begin{equation}\label{eq:tu}
	t_u := \left(\frac{\int_\Omega |\Delta u|^2 \, dx 
		+ \omega^2\int_\Omega |\nabla u|^2 \, dx}{\kappa  \int_\Omega |\nabla u|^4 \, dx}\right)^{1/2}
	\end{equation}
	such that $\frac{\partial}{\partial t} \mathcal{E}_1(t u) = 0$ at $t=t_u$, which implies that $t_u u \in \mathcal{N}$. 
	Further, analyzing directly the fibering functional, we derive that $t_u$ is a point of global maximum of $\mathcal{E}_1(tu)$ and $\mathcal{E}_1(t_u u)>0$. 
	
	To prove the last assertion we assume, without loss of generality, that $0 \in \Omega$, and take any $u \in C_0^\infty(\mathbb{R}^N)$ such that $\text{supp}\, u \subset B \subset \Omega$, where $B$ is a sufficiently small ball centered at the origin. Finding $t_u$ by \eqref{eq:tu} and substituting it to \eqref{eq:E1fib}, we get
	$$
	\mathcal{E}_1(t_u u) = \frac{\left(\int_\Omega |\Delta u|^2 \, dx 
		+ \omega^2\int_\Omega |\nabla u|^2 \, dx \right)^2}{2 \kappa \int_\Omega |\nabla u|^4 \, dx}.
	$$
	Consider now the function $u_\sigma$ defined by $u_\sigma(x) := u(x/\sigma)$ for $\sigma \in (0,1)$. We see that $u_\sigma \in C_0^\infty(B)$. Moreover, we obtain
	$$
	\mathcal{E}_1(t_{u_\sigma} u_\sigma) = \frac{\left(\sigma^{N-4}\int_B |\Delta u|^2 \, dx 
		+ \omega^2 \sigma^{N-2} \int_B |\nabla u|^2 \, dx \right)^2}{2 \kappa \sigma^{N-4} \int_B |\nabla u|^4 \, dx}
	\geq
	\sigma^{N-4}\frac{\left(\int_B |\Delta u|^2 \, dx \right)^2}{2 \kappa \int_B |\nabla u|^4 \, dx}.
	$$
	Therefore, recalling that $N\leq 3$, we deduce that for any $c>0$ there exists sufficiently small $\sigma > 0$ such that 
	$t_{u_\sigma} u_\sigma \in \mathcal{N}$ satisfies $\mathcal{E}_1(t_{u_\sigma} u_\sigma) > c$.
\end{proof}

\begin{cor}
	Any nonzero critical point of $\mathcal{E}_1$ has positive energy.
\end{cor}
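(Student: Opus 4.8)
The plan is to deduce the statement directly from the fibering analysis already established in Lemma~\ref{lem:Neh}. First I would note that every nonzero critical point $u$ of $\mathcal{E}_1$ satisfies $\langle\mathcal{E}_1'(u),u\rangle=0$; by the very definition of $Q$ this reads $Q(u)=0$, so $u\in\mathcal{N}$. Thus it suffices to show that $\mathcal{E}_1(u)>0$ for every $u\in\mathcal{N}$.

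Next I would apply the first part of Lemma~\ref{lem:Neh} to the function $u$ itself (which is nonzero since $0\notin\mathcal{N}$): there exists a \emph{unique} $t_u>0$ with $t_u u\in\mathcal{N}$, this $t_u$ is the point of global maximum of $t\mapsto\mathcal{E}_1(tu)$ on $(0,+\infty)$, and $\mathcal{E}_1(t_u u)>0$. Since $u=1\cdot u\in\mathcal{N}$, the uniqueness of $t_u$ forces $t_u=1$. Consequently $\mathcal{E}_1(u)=\mathcal{E}_1(t_u u)>0$, which proves the corollary.

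There is no real obstacle here: the corollary is an immediate consequence of the explicit structure of the fibering map $t\mapsto\mathcal{E}_1(tu)$ exhibited in~\eqref{eq:E1fib} and~\eqref{eq:tu}. The only point requiring a word of care is the correct invocation of the uniqueness assertion in Lemma~\ref{lem:Neh} to identify the scaling parameter $t_u$ with $1$ when $u$ already lies on the Nehari manifold.
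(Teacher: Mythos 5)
Your argument is correct and is precisely the derivation the paper intends: the corollary is stated without proof immediately after Lemma~\ref{lem:Neh}, and your chain (critical point $\Rightarrow Q(u)=0 \Rightarrow u\in\mathcal{N}$, then uniqueness of $t_u$ forces $t_u=1$, so $\mathcal{E}_1(u)=\mathcal{E}_1(t_u u)>0$) is the natural way to make that implication explicit. One could also bypass the uniqueness step entirely by substituting the constraint $Q(u)=0$ into the functional to get $\mathcal{E}_1(u)=\frac{\kappa}{4}\int_\Omega|\nabla u|^4\,dx>0$, an identity the paper itself uses later in the proof of Proposition~\ref{prop:approx_first}, but this is the same computation in different clothing.
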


\smallskip
\noindent \textit{Proof of Proposition \ref{prop:approx_first}}.
	Consider the minimization problem
	\begin{equation*}\label{N22}
	\hat{\mathcal{E}}_1 := 
	\min
	\left\{
	\mathcal{E}_1(u): ~ u \in  \mathcal{N}
	\right\}.
	\end{equation*}
	To prove the proposition it is sufficient to show that for any $\omega \in \mathbb{R}$ and $\kappa>0$, $\hat{\mathcal{E}}_1$ is achieved and any corresponding minimizer is a ground state solution of \eqref{eq:prototype2}.

	From Lemma~\ref{lem:Neh} it readily follows that $\mathcal{N} \neq \emptyset$. Thus, there exists a minimizing sequence $\{u_n\}_{n \in \mathbb{N}} \subset \mathcal{N}$ for $\hat{\mathcal{E}}_1$. First we show that $\{u_n\}_{n \in \mathbb{N}}$ is bounded in $X$.	
	Suppose, by contradiction, that $\|\Delta u_n\|_{L^2} \to +\infty$ as $n \to +\infty$. 
	Then, by the Nehari constraint $Q(u_n)=0$, we get
	$$
	\kappa \|\nabla u_n\|_{L^4}^4 = \|\Delta u_n\|_{L^2}^2 + \omega^2\|\nabla u_n\|_{L^2}^2 \to +\infty
	\quad \text{as } n \to +\infty.
	$$
	This implies that
	$$
	\mathcal{E}_1(u_n) = \frac{\kappa}{4} \int_\Omega |\nabla u_n|^4 \, dx \to +\infty
	\quad \text{as } n \to +\infty,
	$$
	which contradicts a minimizing property of $\{u_n\}_{n \in \mathbb{N}}$. 
	Thus, $\{u_n\}_{n \in \mathbb{N}}$ is bounded in $X$ and hence $u_n$ converges, up to a subsequence, to some $u \in X$ weakly in $X$ and strongly in $W^{1,2}(\Omega)$ and $W^{1,4}(\Omega)$, since $N \leq 3$, cf. Section \ref{sec:preliminaries}.
	
	The embedding $W^{2,2}(\Omega) \hookrightarrow W^{1,4}(\Omega)$ implies the existence of $c_1 > 0$ such that
	$$
	c_1 \|\nabla u_n\|_{L^4} \leq \|\Delta u_n\|_{L^2}
	\quad
	\text{for all }
	n \in \mathbb{N}.
	$$
	Then, by the Nehari constraint $Q(u_n)=0$, we have
	$$
	c_1 \|\nabla u_n\|_{L^4}^2 \leq \|\Delta u_n\|_{L^2}^2 = \kappa\|\nabla u_n\|_{L^4}^4 - \omega^2\|\nabla u_n\|_{L^2}^2 \leq \kappa \|\nabla u_n\|_{L^4}^4
	\quad
	\text{for all }
	n \in \mathbb{N}.
	$$
	Therefore, we conclude that $\|\nabla u_n\|_{L^4} > c_2 > 0$ for some $c_2 > 0$ and all $n \in \mathbb{N}$, that is, $u \not\equiv 0$ a.e.\ in $\Omega$, due to the strong convergence $u_n \to u$ in $W^{1,4}(\Omega)$.
	
	Let us show now that $u_n \to u$ strongly in $X$. Suppose, by contradiction, that
	$$
	\|\Delta u\|_{L^2} < \liminf_{n \to +\infty} \|\Delta u_n\|_{L^2}.
	$$
	Recalling that $u_n \to u$ strongly in $W^{1,2}(\Omega)$ and $W^{1,4}(\Omega)$, we get $Q(u) < 0$.
	However, from Lemma~\ref{lem:Neh} we know that there exists $t_u \in (0,1)$, such that $t_u u \in \mathcal{N}$. 
	Since $t_{u_n} = 1$ for each $n \in \mathbb{N}$ and $t_{u_n}$ is a point of global maximum of each $\mathcal{E}_1(t u_n)$ with respect to $t>0$, we have
	$$
	\hat{\mathcal{E}}_1 \leq \mathcal{E}_1(t_u u) < \liminf_{n \to +\infty} \mathcal{E}_1(t_u u_n) \leq \liminf_{n \to +\infty} \mathcal{E}_1(u_n) = \hat{\mathcal{E}}_1,
	$$
	which is impossible. Hence, $u_n \to u$ strongly in $X$ and $u \in \mathcal{N}$.
	
	Finally, let us show that $u$ is a critical point of $\mathcal{E}_1$. 
	Indeed, applying the Lagrange multiplier rule (see, e.g., \cite[Theorem 48.B]{zeidler}), we obtain $\nu_1, \nu_2 \in \mathbb{R}$ such that $|\nu_1|+|\nu_2|>0$ and 
	$$
	\nu_1 \left<\mathcal{E}_1'(u),\varphi\right>
	+
	\nu_2 \left<Q'(u),\varphi\right> = 0
	\quad
	\text{for all }
	\varphi \in X.
	$$
	Taking $\varphi = u$ and noting that
	\begin{align*}
	\left<Q'(u), u \right>
	= 
	2\int_\Omega |\Delta u|^2 \, dx + 2\omega^2\int_\Omega |\nabla u|^2 \, dx - 4\kappa \int_\Omega |\nabla u|^4 \, dx
	=-2\kappa\int_\Omega |\nabla u|^4 \, dx < 0,
	\end{align*}
	we conclude $\nu_2 = 0$, $\nu_1 \neq 0$, and hence $\left<\mathcal{E}_1'(u), \varphi\right>=0$ for all $\varphi \in X$, that is, $u$ is a critical point of $\mathcal{E}_1$.
\qed

\medskip
\begin{remark}
We see that, unlike \eqref{eq:pro1}, the problem \eqref{eq:prototype2} admits a nonzero solution without the assumption $\kappa-\omega^2 > \lambda_1$. 
\end{remark}

\subsection{Second approximative problem}

Consider now the problem \eqref{eq:prototype1}:
\begin{equation*}
\Delta^2 u -\omega^2 \Delta u + \kappa \, \text{div} \left(|\nabla u|^2 \nabla u\right) 
-\frac{\kappa}{2} \, \text{div} \left(|\nabla u|^4 \nabla u\right)
= 0.
\end{equation*}
subject to Dirichlet or Navier boundary conditions \eqref{eq:Dirichlet} or  \eqref{eq:Navier}, respectively.
The energy functional associated with \eqref{eq:prototype1},
$$
\mathcal{E}_2(u) 
= 
\frac{1}{2} \int_\Omega |\Delta u|^2 \, dx
+\frac{\omega^2}{2} \int_\Omega |\nabla u|^2 \, dx
- \frac{\kappa}{4} \int_\Omega |\nabla u|^4 \, dx
+ \frac{\kappa}{12} \int_\Omega |\nabla u|^6 \, dx
$$
is well-defined in $X$ for $N \leq 3$ and in $X \cap W^{1,6}(\Omega)$ for $N \geq 4$.
\begin{prop}\label{prop:approx_two}
	Let $N \geq 1$. Then for any $\omega \in \mathbb{R}$ there exists sufficiently large $\kappa > 0$ such that $\mathcal{E}_2$ possesses a nonzero global minimizer $u$ with $\mathcal{E}_2(u) < 0$.
\end{prop}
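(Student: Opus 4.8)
The plan is to obtain $u$ by the direct method, minimizing $\mathcal{E}_2$ over the whole working space — that is, over $X$ when $N\le 3$ and over $X\cap W^{1,6}(\Omega)$, equipped with the norm $\|\Delta u\|_{L^2}+\|\nabla u\|_{L^6}$, when $N\ge 4$. The first step is to show that $\mathcal{E}_2$ is bounded below and coercive, and this comes from absorbing the sign-indefinite term $-\tfrac{\kappa}{4}\|\nabla u\|_{L^4}^4$ into the highest-order term $\tfrac{\kappa}{12}\|\nabla u\|_{L^6}^6$. Concretely, the elementary inequality $a^4\le \delta a^6+\delta^{-2}$, valid for all $a\ge 0$ and $\delta>0$, applied pointwise with $a=|\nabla u(x)|$ and $\delta=1/6$ and integrated over $\Omega$, gives $\tfrac{\kappa}{4}\|\nabla u\|_{L^4}^4\le \tfrac{\kappa}{24}\|\nabla u\|_{L^6}^6+9\kappa|\Omega|$, whence
\begin{equation*}
\mathcal{E}_2(u)\ \ge\ \frac12\|\Delta u\|_{L^2}^2+\frac{\omega^2}{2}\|\nabla u\|_{L^2}^2+\frac{\kappa}{24}\|\nabla u\|_{L^6}^6-9\kappa|\Omega|.
\end{equation*}
Thus $m:=\inf \mathcal{E}_2>-\infty$, and any minimizing sequence $\{u_n\}$ is bounded in the working space; by reflexivity and the Rellich theorem we may assume, along a subsequence, that $u_n\rightharpoonup u$ weakly in the working space and $\nabla u_n\to \nabla u$ strongly in $L^2(\Omega)$.

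The second step is weak lower semicontinuity of $\mathcal{E}_2$ along $\{u_n\}$. The terms $\|\Delta u_n\|_{L^2}^2$, $\|\nabla u_n\|_{L^2}^2$ and $\|\nabla u_n\|_{L^6}^6$ are weakly lower semicontinuous (norms are, and $\nabla u_n\rightharpoonup\nabla u$ in $L^6$), so the only delicate point is the negative quartic term, for which lower semicontinuity fails in general and we genuinely need $\nabla u_n\to\nabla u$ strongly in $L^4(\Omega)$. For $N\le 3$ this is immediate from the compact embedding $W^{2,2}(\Omega)\hookrightarrow\hookrightarrow W^{1,4}(\Omega)$ recalled in Section~\ref{sec:preliminaries}. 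For $N\ge 4$ one interpolates: since $\tfrac14=\tfrac{1/4}{2}+\tfrac{3/4}{6}$,
\begin{equation*}
\|\nabla u_n-\nabla u\|_{L^4}\ \le\ \|\nabla u_n-\nabla u\|_{L^2}^{1/4}\,\|\nabla u_n-\nabla u\|_{L^6}^{3/4}\ \to\ 0,
\end{equation*}
the $L^2$ factor tending to $0$ while the $L^6$ factor stays bounded by construction of the working space. Hence $\int_\Omega|\nabla u_n|^4\,dx\to\int_\Omega|\nabla u|^4\,dx$, and combining this with the lower semicontinuity of the other three terms gives $\mathcal{E}_2(u)\le\liminf_n \mathcal{E}_2(u_n)=m$; since $u$ belongs to the working space, $\mathcal{E}_2(u)=m$, i.e.\ $u$ is a global minimizer.

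It remains to choose $\kappa$ so that $m<0$, which forces $u\ne 0$ because $\mathcal{E}_2(0)=0$. Fix any $\varphi\in C_0^\infty(\Omega)\setminus\{0\}$ and a number $s_0>0$ small enough that $\tfrac{s_0^2}{12}\|\nabla\varphi\|_{L^6}^6<\tfrac14\|\nabla\varphi\|_{L^4}^4$; then, with $w:=s_0\varphi$, the constant $B:=\tfrac14\|\nabla w\|_{L^4}^4-\tfrac{1}{12}\|\nabla w\|_{L^6}^6$ is strictly positive and $\mathcal{E}_2(w)=\tfrac12\|\Delta w\|_{L^2}^2+\tfrac{\omega^2}{2}\|\nabla w\|_{L^2}^2-\kappa B$. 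Consequently, for every $\omega\in\mathbb{R}$, as soon as $\kappa>B^{-1}\bigl(\tfrac12\|\Delta w\|_{L^2}^2+\tfrac{\omega^2}{2}\|\nabla w\|_{L^2}^2\bigr)$ we obtain $m\le \mathcal{E}_2(w)<0$, hence $u\ne 0$ and $\mathcal{E}_2(u)<0$. Since $u$ is an unconstrained global minimizer of the $C^1$ functional $\mathcal{E}_2$, it satisfies $\mathcal{E}_2'(u)=0$, i.e.\ it is a weak solution of \eqref{eq:prototype1}.

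The step I expect to be the main obstacle is precisely the passage to the limit in the sign-indefinite quartic term $-\tfrac{\kappa}{4}\|\nabla u\|_{L^4}^4$: unlike the other terms it is not weakly lower semicontinuous, so the argument hinges on upgrading weak convergence of the minimizing sequence to strong convergence of the gradients in $L^4$. For $N\ge 4$ this is not available from the natural $W^{2,2}$-bound alone and must be recovered by interpolating the Rellich $L^2$-compactness against the $L^6$-bound built into the working space — and it is the same term that, in the coercivity estimate, is tamed only after being absorbed into the $L^6$-term.
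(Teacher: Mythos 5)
Your proposal is correct and follows essentially the same route as the paper: the direct method for a global minimizer (coercivity by absorbing the $L^4$ term into the $L^6$ term, weak lower semicontinuity via compactness/interpolation for the gradient terms), plus a scaled test function to push the infimum below zero for large $\kappa$. The only differences are cosmetic — the paper uses $t\varphi_1$ instead of a scaled $C_0^\infty$ function and leaves the coercivity and lower-semicontinuity verifications as ``not hard to show,'' which you have filled in correctly.
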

\begin{proof}
	Consider the first eigenfunction $\varphi_1$ of the problem \eqref{eq:eigenvalue}. 
	Since $\varphi \in C^4(\overline{\Omega})$, we have
	$$
	\mathcal{E}_2(t\varphi_1) 
	= 
	\frac{(\lambda_1+\omega^2)t^2}{2} \int_\Omega |\nabla \varphi_1|^2 \, dx
	- \frac{\kappa t^4}{4} \int_\Omega |\nabla \varphi_1|^4 \, dx
	+ \frac{\kappa t^6}{12} \int_\Omega |\nabla \varphi_1|^6 \, dx.
	$$
	For any $\omega \in \mathbb{R}$ there exist $\kappa > 0$ (large enough) and $t>0$ such that 
	\begin{equation}\label{eq:neg}
	\frac{\mathcal{E}_2(t\varphi_1)}{t^2}
	= 
	\frac{(\lambda_1+\omega^2)}{2} \int_\Omega |\nabla \varphi_1|^2 \, dx
	- \frac{\kappa t^2}{4}\left(\int_\Omega |\nabla \varphi_1|^4 \, dx
	- \frac{t^2}{3} \int_\Omega |\nabla \varphi_1|^6 \, dx\right) < 0.
	\end{equation}
	Moreover, it is not hard to show that $\mathcal{E}_2$ is weakly lower-semicontinuous and coercive in $X$ for $N \leq 3$ and in 
	$X \cap W^{1,6}(\Omega)$ for $N \geq 4$. Therefore, the direct method of calculus of variations implies the existence of a global minimizer $u$ of $\mathcal{E}_2$ and \eqref{eq:neg} entails $\mathcal{E}_2(u) < 0$. 
\end{proof}

\begin{remark}
The result of Proposition \ref{prop:approx_two} states that a ground state solution $u$ of the problem \eqref{eq:prototype1} for sufficiently large $\kappa>0$ satisfies $\mathcal{E}_2(u) < 0$ and it is not a critical point of saddle type, in contrast with problems \eqref{eq:pro1} and \eqref{eq:prototype2}.
\end{remark}

\section{Conclusions}\label{sec:conclusion}

We proved the existence, nonexistence, and multiplicity results for the stationary Zakharov equation \eqref{eq:pro1} in a bounded domain. 
We also discussed approximative problems \eqref{eq:prototype2} and \eqref{eq:prototype1} considered previously in the literature and found striking differences among all of these problems. 

In particular, our nonexistence result states that there are no nonzero solutions of \eqref{eq:pro1} for $\omega^2$ large enough. Analysis of the Nehari manifold similar to that from Section \ref{sec:preliminaries} indicates a nonexistence result for large $\omega^2$ when we consider nonzero solutions of \eqref{eq:pro1} in the entire space $\mathbb{R}^N$ decaying at infinity.
Let us emphasize that corresponding physical models should possess similar nonexistence property as it was observed, e.g., in \cite[p.\ 86]{dyachenko1991}.
However, a model described by \eqref{eq:prototype2} does not have this property as shown in Proposition \ref{prop:approx_first}. See \cite{colin} for the entire space case $\mathbb{R}^N$.

Another difference is that ground state solutions of \eqref{eq:pro1} and \eqref{eq:prototype2} are saddle-type critical points of associated energy functionals, but the ground state solution of the second approximative problem \eqref{eq:prototype1} can be a global energy minimizer. Moreover, critical energy levels for the original problem \eqref{eq:pro1} and approximative problems \eqref{eq:prototype2} and \eqref{eq:prototype1} exhibit different properties. 
Namely, the critical levels associated with \eqref{eq:pro1} are bounded, see Remark \ref{rem:crit1}, whereas there is no such a restriction for the problem \eqref{eq:prototype2}, see Lemma \ref{lem:Neh}. Furthermore, ground state levels of \eqref{eq:pro1} and \eqref{eq:prototype2} are always positive, while the ground state level of \eqref{eq:prototype1} may be negative.

\smallskip
\bigskip
\noindent
\textbf{Acknowledgments.}
V. Bobkov and P. Dr\'abek were supported by the grant 18-03253S of the Grant Agency of the Czech Republic. V. Bobkov and Y. Ilyasov were also supported by the project LO1506 of the Czech Ministry of Education, Youth and Sports.
Y. Ilyasov wishes to thank the University of West Bohemia, where this research was started, for the invitation and hospitality.

\addcontentsline{toc}{section}{\refname}
\small

\end{document}